\newtheorem{Theorem}{\sc Theorem}[section]
\newtheorem{Lemma}[Theorem]{\sc Lemma}
\newtheorem{Proposition}[Theorem]{\sc Proposition}
\newtheorem{Corollary}[Theorem]{\sc Corollary}
\theoremstyle{definition}
\newtheorem{Remark}[Theorem]{\sc Remark}
\theoremstyle{plain}
\newtheorem{thm}{Theorem}[section]
\theoremstyle{definition}
\numberwithin{equation}{section}
\begin{document}

\title[On improvements of the Hardy, Copson and Rellich inequalities]
{On improvements of the Hardy, Copson and Rellich inequalities}

\author[Das]{Bikram Das}
\address{Dr. APJ Abdul Kalam Technical University, Lucknow-226021 \&
Indian Institute of Carpet Technology, Chauri Road, Bhadohi- 221401, Uttar Pradesh, India}
\email{dasb23113@gmail.com}
\author[Manna]{Atanu Manna$^*$ }
\address{Indian Institute of Carpet Technology, Bhadohi-221401, Uttar Pradesh, India}
\email{atanu.manna@iict.ac.in, atanuiitkgp86@gmail.com ($^*$Corresponding author)}

\subjclass[2010]{Primary 26D15; Secondary 26D10.}
\keywords{Discrete Hardy's inequality; Improvement; Copson's inequality; Rellich inequality, Knopp inequality.}

\begin{abstract}
Using a method of factorization and by introducing a generalized discrete Dirichlet's Laplacian matrix $(-\Delta_{\Lambda})$, we establish an extended improved discrete Hardy's inequality and Rellich inequality in one dimension. We prove that the discrete Copson inequality (E.T. Copson, \emph{Notes on a series of positive terms}, J. London Math. Soc., 2 (1927), 9-12.) in one-dimension admits an improvement. We also prove that the improved Copson's weights are optimal (in fact \emph{critical}). It is shown that improvement of the Knopp inequalities (Knopp in J. London Math. Soc. 3(1928), 205-211 and 5(1930), 13-21) lies on improvement of the Rellich inequalities. Further, an improvement of the generalized Hardy's inequality (Hardy in Messanger of Math. 54(1925), 150-156) in a special case is obtained.
\end{abstract}
\maketitle

\section{Introduction}{\label{secint}}
The celebrated classical discrete Hardy's inequality (\cite{GHYLITTLE}, Theorem 326) in one dimension asserts that for a sequence $\{a_{n}\}$ of complex numbers the following
\begin{align}{\label{DHI}}
&\displaystyle\sum_{n=1}^{\infty}\Big|\frac{1}{n}\sum_{k=1}^{n}a_{k}\Big|^{2}<4\displaystyle\sum_{n=1}^{\infty}|a_{n}|^{2},
\end{align} holds unless $a_n$ is null. An equivalent version of inequality (\ref{DHI}) is reads as below:
\begin{align}{\label{EFDHI}}
&\displaystyle\sum_{n=1}^{\infty}|A_n-A_{n-1}|^{2}\geq\displaystyle\sum_{n=1}^{\infty}\frac{|A_n|^{2}}{4n^{2}},
\end{align}where $A=\{A_n\}\in C_{c}(\mathbb{N}_0)$, space of finitely supported functions on $\mathbb{N}_0=\{0, 1, 2, 3, \ldots\}$ with the assumption that $A_0=0$. It is known that the constants $`4$' in (\ref{DHI}) or $`\frac{1}{4}$' in (\ref{EFDHI}) is best possible. The Hardy inequalities were beautifully elaborated during the period $1906-1928$ of the twentieth century. There are many contributors such as E. Landau, G. P\'{o}lya, I. Schur, M. Riesz for the development of inequality (\ref{DHI}). A systematic survey article on prehistory of the Hardy's inequality is well-written by Kufner, Maligranda and Persson \cite{AKR}. As an application point of view the inequality (\ref{DHI}) has been extended in various ways. The inequality (\ref{DHI}), and its continuous analogue have major applications in spectral theory, graph theory, and differential equations. \\
In 1925, G. H. Hardy (\cite{GHYNOTE251}, \cite{GHYNOTE252}) established many interesting results. One of them is the following extension of Hardy's inequality (\ref{DHI}). Suppose that $\{q_n\}$ sequences of real numbers such that $q_{n}>0$, and denote $A_{n}=q_{1}a_{1}+ q_{2}a_{2}+ \ldots +q_{n}a_{n}$ and $Q_{n}=q_1+q_2+\ldots+q_{n}$ for $n\in \mathbb{N}$. If $\{\sqrt{q_n}a_{n}\}\in \ell_2$ then
\begin{align}{\label{GDHI}}
\displaystyle\sum_{n=1}^{\infty}q_nQ_{n}^{-2}|A_{n}|^{2}&<4\displaystyle\sum_{n=1}^{\infty}q_{n}|a_{n}|^{2},
\end{align} unless all $a_n$ is null. Also the constant term $`4$' is sharp.\\
E. T. Copson \cite{ECN} further introduced and studied an extended version of inequality (\ref{GDHI}) as below. Let $1<c\leq 2$. Then
\begin{align}{\label{COPI}}
&\displaystyle\sum_{n=1}^{\infty}q_{n}Q^{-c}_{n}|A_{n}|^{2}\leq\Big(\frac{2}{c-1}\Big)^{2}\displaystyle\sum_{n=1}^{\infty}q_{n}Q^{2-c}_{n}|a_{n}|^{2},
\end{align}where the associated constant term is best possible, equality holds good when all $a_n$ are `$0$'.\\
It is observed that the constant $`\frac{1}{4}$' in (\ref{EFDHI}) is best possible but the whole weight $`\frac{1}{4n^2}$' can not, and this surprising discovery is due to Keller, Pinchover and Pogorzelski \cite{MKR} (see also \cite{MKRGRAPH}), and they proved that the following inequality
\begin{align}{\label{IMHI2}}
\displaystyle\sum_{n=1}^{\infty}| A_n-A_{n-1}|^{2}&\geq\displaystyle\sum_{n=1}^{\infty} w_n|A_n|^{2}>\frac{1}{4}\displaystyle
\sum_{n=1}^{\infty} \frac{|A_n|^{2}}{n^{2}},
\end{align}where the weight sequence $w_n$ for which the inequality (\ref{EFDHI}) is improved defined as follows:
\begin{center}
$w_n=\frac{(-\Delta)n^{\frac{1}{2}}}{n^{\frac{1}{2}}}=2-\sqrt{1-\frac{1}{n}}-\sqrt{1+\frac{1}{n}}>\frac{1}{4n^2}$, $n\in \mathbb{N}$,
\end{center}
and the discrete Dirichlet Laplacian operator $(-\Delta)$ acting on the sequence $\{A_n\}$ was introduced (see \cite{BGDKFS}) as below:
\begin{align*}
((-\Delta) A)_{n} &= \left\{
\begin{array}{lll}
    2A_{0}-A_{1} & \quad \mbox{if~~} n=0,\\
    2A_{n}-A_{n-1}-A_{n+1} & \quad \mbox{if~~} n\in\mathbb{N}.
\end{array}\right.
\end{align*}
The authors (\cite{MKR}, \cite{MKRGRAPH}) have also shown that the weight sequence $w_n$ in inequality (\ref{IMHI2}) is optimal \emph{(critical)} which means if $\widetilde{w}\geq w$ holds point-wise for any other Hardy weight $\widetilde{w}$ then $\widetilde{w}=w$. For a more generalized notion of optimality, we refer to the articles \cite{BGDKFS}, \cite{MKRGRAPH}, and \cite{DKALFS} for the readers.\\
An elementary proof of the inequality (\ref{IMHI2}) was given by Krej\v{c}i\v{r}\'{i}k and \v{S}tampach \cite{DKK}, and a different proof of (\ref{IMHI2}) was presented by Huang in \cite{HUANG21}. By using a sequence $\mu=\{\mu_n\}$ of strictly positive real numbers such that $\mu_0=0$, an extension of the inequality (\ref{IMHI2}) was established by Krej\v{c}i\v{r}\'{i}k, Laptev and \v{S}tampach (see Theorem 10, \cite{DKALFS}) as provided below
\begin{align}{\label{IMHIGN3}}
\displaystyle\sum_{n=1}^{\infty}| A_n-A_{n-1}|^{2}&\geq \displaystyle\sum_{n=1}^{\infty} w_n(\mu)|A_n|^{2},
\end{align}
where $w_n(\mu)$ is given by
\begin{center}
$w_n(\mu)=\frac{((-\Delta)\mu)_n}{\mu_n} =2-\frac{\mu_{n-1}}{\mu_n}-\frac{\mu_{n+1}}{\mu_n}$, $n\in \mathbb{N}$.
\end{center}The authors in \cite{DKALFS} also obtained a criteria for optimality (\emph{criticality}) of $w_n(\mu)$, and some further results on it. If one chooses $\alpha=2-c$, and $q_n=1$ for all $n\in \mathbb{N}$ in (\ref{COPI}) then we obtain a power type Hardy inequality
\begin{align}{\label{COPIPAR}}
\displaystyle\sum_{n=1}^{\infty}|A_{n}-A_{n-1}|^{2}{n}^{\alpha}&\geq \frac{(\alpha-1)^2}{4}\sum_{n=1}^{\infty}\frac{|A_{n}|^{2}}{n^2}{n}^{\alpha}.
\end{align}In 2022, Gupta \cite{SGA} studied the improvement of (\ref{COPIPAR}) and proved that when $\alpha\in \{0\}\cup[1/3, 1)$, the following inequality holds:
\begin{align}{\label{GUPI}}
\displaystyle\sum_{n=1}^{\infty}|A_{n}-A_{n-1}|^{2}n^{\alpha} &\geq \displaystyle\sum_{n=1}^{\infty}w_{n}(\alpha,\beta)|A_{n}|^{2}>\frac{(\alpha-1)^2}{4}\sum_{n=1}^{\infty}\frac{|A_{n}|^{2}}{n^2}{n}^{\alpha},
\end{align}where $\alpha, \beta\in \mathbb{R}$, $w_{1}(\alpha,\beta):=1+2^{\alpha}-2^{\alpha+\beta}$, and for $ n\geq2$
\begin{align*}
w_{n}(\alpha,\beta)& =n^{\alpha}\Big[1+\Big(1+\frac{1}{n}\Big)^{\alpha}-\Big(1-\frac{1}{n}\Big)^{\beta}-\Big(1+\frac{1}{n}\Big)^{\alpha+\beta}\Big], ~~~\beta=\frac{1-\alpha}{2}.
\end{align*}
By considering all the latest developments, recently the authors in \cite{DASMAN} have obtained a generalized improved discrete Hardy's inequality in the following form
\begin{align}{\label{GIMHI}}
\displaystyle\sum_{n=1}^{\infty}\frac{|A_{n}-A_{n-1}|^{2}}{\lambda_{n}}\geq \displaystyle\sum_{n=1}^{\infty}w_{n}(\lambda, \mu )|A_n|^{2},
\end{align} where $\lambda=\{\lambda_n\}$ is a real sequence such that $\lambda_{n}>0$, $n\in \mathbb{N}$. The criteria for optimality of the weight sequence $w_{n}(\lambda, \mu)$ is also discussed in \cite{DASMAN}, where $w_{n}(\lambda, \mu)$ was defined as below
\begin{align*}
w_{n}(\lambda, \mu)& =\frac{1}{\lambda_{n}}+\frac{1}{\lambda_{n+1}}-\frac{\mu_{n-1}}{\lambda_{n}\mu_{n}}-\frac{\mu_{n+1}}{\lambda_{n+1}\mu_{n}}.
\end{align*}
Now if we choose $q_n=n$ for $n\in \mathbb{N}$ in (\ref{GDHI}), then we get a particular type of generalized Hardy inequality with sharp constant
\begin{align}{\label{GDHIQNn}}
\displaystyle\sum_{n=1}^{\infty}nQ_{n}^{-2}|A_{n}|^{2}&<4\displaystyle\sum_{n=1}^{\infty}n|a_{n}|^{2},
\end{align}where $Q_n=\frac{n(n+1)}{2}$. As far as our knowledge is concerned, we couldn't find any point-wise improvement study of (\ref{GDHIQNn}) in the literature. Therefore, first we start with the following investigation.
\begin{center}
\emph{Q(a) Is the generalized Hardy's inequality (\ref{GDHIQNn}) admits an improvement?}
\end{center}
The authors in \cite{DASMAN} also studied an improvement of the Copson inequality (\ref{COPI}) in case when $q_n=n$, and $\alpha=2-c$, and obtained an improved Copson inequality when $c=\frac{3}{2}$. Now if we choose $q_n=n^2$, and $c=3/2$ in (\ref{COPI}) then we don't have any information about improvement of the corresponding Copson inequality stated as below
\begin{align}{\label{GDHIQN2CN3BY2}}
\displaystyle\sum_{n=1}^{\infty}\frac{1}{16}\frac{n^{2}}{\widetilde{S_{n}}^{\frac{3}{2}}}|A_n|^{2}&\leq\displaystyle\sum_{n=1}^{\infty}
\sqrt{\widetilde{S}_n}\frac{|A_{n}-A_{n-1}|^{2}}{n^{2}},
\end{align}where $\widetilde{S}_n=\frac{n(n+1)(2n+1)}{6}$, and $A_{n}=a_{1}+4a_2+\ldots+n^2a_n$, $A_0=0$. Similarly, when we choose $q_n=n^3$ with $c=\frac{3}{2}$ in inequality (\ref{COPI}), then we get
\begin{align}{\label{GDHIQN3CN3BY2}}
\displaystyle\sum_{n=1}^{\infty}\frac{1}{16}\frac{n^{3}}{\widehat{S}^{\frac{3}{2}}_{n}}|A_n|^{2}
&\leq\displaystyle\sum_{n=1}^{\infty}\sqrt{\widehat{S}_n}\frac{|A_{n}-A_{n-1}|^{2}}{n^{3}},
\end{align}where $\widehat{S}_n=\Big(\frac{n(n+1)}{2}\Big)^{2}$, and $A_{n}=a_{1}+8a_2+\ldots+n^3a_n$, $A_{0}=0$.

Then it is natural to ask the following question:
\begin{center}
\emph{Q(b) Is it possible to improve the Copson inequalities (\ref{GDHIQN2CN3BY2}) and (\ref{GDHIQN3CN3BY2})?}
\end{center}
On the other hand, if we replace $(-\Delta)$ by bi-Laplacian operator $(-\Delta)^2$ then one gets the discrete Rellich inequality \cite{BGDKFS} in one dimension as below:
\begin{align}{\label{DRI}}
\displaystyle\sum_{n=2}^{\infty}((-\Delta)^2A)_n\bar{A}_n=\displaystyle\sum_{n=1}^{\infty}|((-\Delta)A)_n|^{2}\geq\frac{9}{16}\displaystyle\sum_{n=2}^{\infty}\frac{|A_{n}|^{2}}{n^4},
\end{align}where $A_0=0$, $A_1=0$, and the operator $(-\Delta)^2$ acting on the sequence $\{A_n\}$ is defined as
\begin{align*}
((-\Delta)^2 A)_{n} &= \left\{
\begin{array}{lll}
    5A_{0}-4A_{1}+A_2 & \quad \mbox{if~~} n=0,\\
    -4A_{0}+6A_{1}-4A_2+A_3 & \quad \mbox{if~~} n=1,\\
    6A_{n}-4A_{n-1}-4A_{n+1}+A_{n-2}+A_{n+2} & \quad \mbox{if~~} n\geq 2.
\end{array}\right.
\end{align*}
The discrete Rellich inequality (\ref{DRI}) with a sharp constant $\frac{9}{16}$ (as compare to the continuous analogue (\ref{CRI})), and its improvement is studied for the first time by Gerhat, Krej\v{c}i\v{r}\'{i}k, and \v{S}tampach in \cite{BGDKFS}. Although, Gupta \cite{SGAINTEGER} (see also \cite{SGALAT} for higher dimensions) obtained a similar discrete Rellich inequality (\ref{DRI}) but with a weaker constant $\frac{8}{16}$ instead of $\frac{9}{16}$.
The continuous analogue of (\ref{DRI}) was appeared in \cite{RELLICH}, and is reads as follows
\begin{align}{\label{CRI}}
\displaystyle\int_{0}^{\infty}|f''(x)|^2dx\geq\frac{9}{16}\displaystyle\int_{0}^{\infty}\frac{|f(x)|^{2}}{x^4}dx,
\end{align}where $f\in L^2(0, \infty)$ such that $f(0)=f'(0)=0$, and the associated constant term $\frac{9}{16}$ is best possible. Similar to the case of point-wise improvement of (\ref{DHI}), Gerhat, Krej\v{c}i\v{r}\'{i}k and \v{S}tampach \cite{BGDKFS} shown that there exist a weight sequence $\rho_n^{(2)}$ for which the whole weight $\frac{9}{16n^4}$ in (\ref{DRI}) can be improved. In fact, the authors in \cite{BGDKFS} proved that
\begin{align}{\label{IMDRI}}
\displaystyle\sum_{n=1}^{\infty}|((-\Delta)A)_n|^{2}& \geq \displaystyle\sum_{n=2}^{\infty}\rho_n^{(2)}\frac{|A_{n}|^{2}}{n^4}>\frac{9}{16}\displaystyle\sum_{n=2}^{\infty}\frac{|A_{n}|^{2}}{n^4},
\end{align}
where improved Rellich weights $\rho_n^{(2)}$ for $n\geq 2$ is defined as follows:
\begin{align*}
\rho_n^{(2)}=\frac{(-\Delta)^2n^{\frac{3}{2}}}{n^{\frac{3}{2}}}& = 6-4\Big(1+\frac{1}{n}\Big)^{3/2}-4\Big(1-\frac{1}{n}\Big)^{3/2}+\Big(1+\frac{2}{n}\Big)^{3/2}+\Big(1-\frac{2}{n}\Big)^{3/2}>\frac{9}{16n^4}.
\end{align*}It is shown that the Rellich weight $\rho_n^{(2)}$ is not critical but is non-attainable and optimal near infinity \cite{BGDKFS}. In a more recent preprint, the authors in \cite{BGDKFSCRITI} investigated the criticality and sub-criticality of the positive powers of the discrete Laplacian $(-\Delta)$ on the half-line. The Rellich inequality is studied in various domains such as in graph setting (see \cite{HUANGYE}, \cite{MKRRELGRAPH}), on integers and lattices (see \cite{SGALAT}, \cite{SGAINTEGER}). To establish the inequality (\ref{IMDRI}), the authors considered a method of factorization from \cite{GESZLITT} (see also \cite{GESZLITTMIWEL}) and a remainder approach (see \cite{DKK}) of proving the inequality (\ref{IMHI2}). At the end, a conjecture was given for the higher order Rellich inequalities (see Section 4, \cite{BGDKFS}). Indeed, it is conjectured that
\begin{align}{\label{IMPROR}}
\displaystyle\sum_{n=1}^{\infty}|((-\Delta)^{\alpha}A)_{n}|^2=\displaystyle\sum_{n=\alpha}^{\infty}((-\Delta)^{\alpha}A)_{n}\bar{A}_{n}&\geq\displaystyle
\sum_{n=\alpha}^{\infty}\rho^{(\alpha)}_{n}|A_{n}|^{2}
>\displaystyle\sum_{n=1}^{\infty}\frac{((2\alpha)!)^{2}}{16^{\alpha}(\alpha!)^{2}}\frac{1}{n^{2\alpha}}|A_{n}|^{2},
\end{align}where $A_{n}=0$ for $0\leq n\leq\alpha-1$ and higher order Rellich weight is given as below:
\begin{center}
$\rho^{(\alpha)}_{n}=\frac{((-\Delta)^\alpha\mu^{(\alpha)})_n}{\mu_n^{(\alpha)}}$, $\mu_n^{(\alpha)}=n^{\alpha-\frac{1}{2}}$.
\end{center}
With the above discussions, one may be interested to know answer of the following query:
\begin{center}
\emph{Q(c) Can we extend the Rellich inequality (\ref{IMDRI}) for an aribitrary sequence $\{\delta_n\}$ (say)?}
\end{center}
When passing through a literature of Hardy-type inequalities, we have found the existence of Knopp inequality \cite{KNOPP}(see also \cite{BENN}) which states that for $\alpha\geq1$ the following sharp inequality
\begin{align}{\label{KNOP1}}
&\displaystyle\sum_{n=1}^{\infty}\Big(\frac{1}{\binom{n-1+\alpha}{n-1}}\displaystyle\sum_{k=1}^{n}\binom{n-k+\alpha-1}{n-k}|a_{k}|\Big)^{2}
<\Big(\frac{\Gamma(\alpha+1)\Gamma(\frac{1}{2})}{\Gamma(\alpha+\frac{1}{2})}\Big)^2\displaystyle\sum_{n=1}^{\infty}|a_{n}|^{2},
\end{align}holds unless $a_n$ is null. Note that in case of $\alpha=1$, Knopp inequality (\ref{KNOP1}) is nothing but a classical discrete Hardy's inequality (\ref{DHI}). We now have another question, which will also be investigated here in the sequel.
\begin{center}
\emph{Q(d) Can the Knoop inequality (\ref{KNOP1}) be improved?}
\end{center}
Therefore, the main aim of this paper is to provide answer of all queries (Q(a) to Q(d)) raised above. With this aim, first we define an elongated discrete Dirichlet Laplacian $(-\Delta_{\Lambda})$ acting on a sequence $\{A_n\}$, and establish the corresponding improved discrete Hardy's inequality by using a factorization technique as considered in \cite{BGDKFS} (see also \cite{GESZLITT}, \cite{GESZLITTMIWEL}). As an application of this result, we give an answer to the Question (a). In a response to Question (b), we prove that there exists weight sequences for which improvement of the Copson inequalities (\ref{GDHIQN2CN3BY2}) and (\ref{GDHIQN3CN3BY2}) is possible. We also prove that the weight sequences in these improved Copson inequalities are optimal (\emph{critical}). Later, we define the square of Dirichlet Laplacian $(-\Delta_{\Lambda})$ through which generalized improved Rellich inequality will be established, and come up with an answer to Question (c). Finally, we give an affirmative answer to Question (d), that is we prove that the Knopp inequality (\ref{KNOP1}) admits an improvement. In particular, it is proved that improvement of the Knopp inequality lies on the improvement of the Rellich inequalities.\\
\indent The paper is arranged as follows. In Section 2, we establish an extended improved discrete Hardy's inequality in one dimension, and improve a generalized Hardy's inequality in a particular case. Section 3, deals with improvement of the Copson inequality with optimality. In Section 4, we establish an extended improved discrete Rellich inequality in one dimension for an arbitrary sequence $\{\lambda_n\}$ by using a method of factorization. Also validity of the remainder term is established by proving the existence of such $\{\lambda_n\}$. Finally, Section 5 demonstrates that Knopp inequality can be improved via Rellich inequality.

\section{Further extension of improved Hardy's inequality}
Before proving main results in this section, we first define an elongated version of the Dirichlet Laplacian $(-\Delta)$. For this, suppose that $\{\lambda_{n}\}$ is a strictly positive sequence of real numbers, and denote $\Lambda_{n}=\lambda_{1}+\ldots+\lambda_{n}$, $n\in\mathbb{N}$. Suppose further that $1<c\leq2$ and $A=\{A_{n}\}\in C_c(\mathbb{N}_0)$. The discrete operator Dirichlet Laplacian $(-\Delta_\Lambda)$ acting on $A$ is defined as below:
\begin{align*}
((-\Delta_\Lambda) A)_{n} &= \left\{
\begin{array}{lll}
    (\frac{\Lambda^{2-c}_{0}}{\lambda_{0}}+\frac{\Lambda^{2-c}_{1}}{\lambda_{1}})A_{0}-\frac{\Lambda^{2-c}_{1}}{\lambda_{1}}A_{1} & \quad \mbox{if~~} n=0,\\
    (\frac{\Lambda^{2-c}_{n+1}}{\lambda_{n+1}}+\frac{\Lambda^{2-c}_{n}}{\lambda_{n}})A_{n}-\frac{\Lambda^{2-c}_{n}}{\lambda_{n}}A_{n-1}-\frac{\Lambda^{2-c}_{n+1}}{\lambda_{n+1}}A_{n+1} & \quad \mbox{if~~} n\in\mathbb{N},
\end{array}\right.
\end{align*}where we assume that $\Lambda_0=\lambda_0$ is a strictly positive real number.\\
It is easy to observed that the corresponding infinite matrix representation of $(-\Delta_{\Lambda})$ acting on $A=\{A_n\}\in C_c(\mathbb{N})$ has the following form:
\begin{align*}
 (-\Delta_{\Lambda})=
\begin{pmatrix}
\Big(\frac{\Lambda^{2-c}_{1}}{\lambda_{1}}+\frac{\Lambda^{2-c}_{2}}{\lambda_{2}}\Big) & -\frac{\Lambda^{2-c}_{2}}{\lambda_{2}} &0 & 0 & \cdots\\
-\frac{\Lambda^{2-c}_{2}}{\lambda_{2}}  &\Big(\frac{\Lambda^{2-c}_{2}}{\lambda_{2}}+\frac{\Lambda^{2-c}_{3}}{\lambda_{3}}\Big)& -\frac{\Lambda^{2-c}_{3}}{\lambda_{3}}&0&\cdots  \\
0 & -\frac{\Lambda^{2-c}_{3}}{\lambda_{3}} & \Big(\frac{\Lambda^{2-c}_{3}}{\lambda_{3}}+\frac{\Lambda^{2-c}_{4}}{\lambda_{4}}\Big)& -\frac{\Lambda^{2-c}_{4}}{\lambda_{4}}&\cdots\\
\vdots & \vdots & \vdots &\vdots & \ddots
\end{pmatrix}
\end{align*}
In the following, we establish an extended improved discrete Hardy's inequality for a large class of sequences by using a factorization technique as considered in \cite{BGDKFS}. In what follows, we denote $E=diag\{\eta_{1},\eta_{2},\ldots\}$. Then we have the following result stated as below.
\begin{thm}{\label{THMIDHI}}
Let $A=\{A_n\}$ be any sequence of complex numbers such that $A_n\in C_c(\mathbb{N}_0)$ with $A_{0}=0$ and $\mu=\{\mu_{n}\}$ be any strictly positive sequence of real numbers. Then we have the following identity:
\begin{align}{\label{HFI}}
&A(-\Delta_\Lambda){\bar{A}}^{T}=AE\bar A^{T}+A({\overline{R}^{(1)}_{\Lambda}}^TR^{(1)}_{\Lambda})\bar A^{T},
\end{align}
where the sequence $\eta=\{\eta_{n}\}$ is defined as below:
\begin{align*}
\eta_{n}& =\frac{((-\Delta_\Lambda)\mu)_n}{\mu_n}=\frac{\Lambda^{2-c}_{n}}{\lambda_{n}}+\frac{\Lambda^{2-c}_{n+1}}{\lambda_{n+1}}-\frac{\mu_{n-1}\Lambda^{2-c}_{n}}
{\lambda_{n}\mu_{n}}-\frac{\mu_{n+1}\Lambda^{2-c}_{n+1}}{\lambda_{n+1}\mu_{n}}
\end{align*}
and $R^{(1)}_{\Lambda}$ has the following matrix representation:
\begin{align*}
 R^{(1)}_{\Lambda}=
\begin{pmatrix}
\sqrt{\frac{\mu_{2}\Lambda^{2-c}_{2}}{\mu_{1}\lambda_{2}}} & -\sqrt{\frac{\mu_{1}\Lambda^{2-c}_{2}}{\mu_{2}\lambda_{2}}} &0 & 0 & 0 & \cdots\\
0 &\sqrt{\frac{\mu_{3}\Lambda^{2-c}_{3}}{\mu_{2}\lambda_{3}}}  &-\sqrt{\frac{\mu_{2}\Lambda^{2-c}_{3}}{\mu_{3}\lambda_{3}}}&0& 0&\cdots\\
0 & 0  &\sqrt{\frac{\mu_{4}\Lambda^{2-c}_{4}}{\mu_{3}\lambda_{4}}}& -\sqrt{\frac{\mu_{3}\Lambda^{2-c}_{4}}{\mu_{4}\lambda_{4}}}&0&\cdots \\
\vdots & \vdots & \vdots &\vdots & \ddots
\end{pmatrix}.
\end{align*}The identity (\ref{HFI}) means the following inequality holds:
\begin{align*}
\displaystyle\sum_{n=1}^{\infty}\Big|\frac{A_{n-1}-A_{n}}{\sqrt\lambda_{n}}\Big|^{2}\Lambda^{2-c}_{n}& \geq
\displaystyle\sum_{n=1}^{\infty}\eta_{n}|A_{n}|^{2}.
\end{align*}
\end{thm}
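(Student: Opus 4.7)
The statement is an algebraic factorization identity for the generalized Laplacian $(-\Delta_\Lambda)$ plus a non-negativity observation, so my plan is to verify the identity entrywise and then exploit positivity of the remainder term.

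First I would perform the matrix product $\overline{R^{(1)}_\Lambda}^{\,T} R^{(1)}_\Lambda$ directly from the bidiagonal representation of $R^{(1)}_\Lambda$ given in the statement. Since the only nonzero entries of row $k$ of $R^{(1)}_\Lambda$ lie in columns $k$ and $k+1$, the product is automatically tridiagonal. A short calculation shows that its super- and sub-diagonal entries in positions $(n,n+1)$ and $(n+1,n)$ equal $-\Lambda^{2-c}_{n+1}/\lambda_{n+1}$, which already coincide with the off-diagonal entries of $(-\Delta_\Lambda)$, while the diagonal $(n,n)$-entry equals $\frac{\mu_{n+1}\Lambda^{2-c}_{n+1}}{\mu_n\lambda_{n+1}} + \frac{\mu_{n-1}\Lambda^{2-c}_n}{\mu_n\lambda_n}$.

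Next I would subtract this tridiagonal matrix from $(-\Delta_\Lambda)$ entrywise. Off-diagonals cancel, and the diagonal difference at position $(n,n)$ is precisely
\[
\frac{\Lambda^{2-c}_n}{\lambda_n}+\frac{\Lambda^{2-c}_{n+1}}{\lambda_{n+1}}-\frac{\mu_{n-1}\Lambda^{2-c}_n}{\lambda_n\mu_n}-\frac{\mu_{n+1}\Lambda^{2-c}_{n+1}}{\lambda_{n+1}\mu_n}=\eta_n,
\]
which is the defining expression of $\eta_n$ (and coincides with $((-\Delta_\Lambda)\mu)_n/\mu_n$ upon regrouping). This verifies the operator identity $(-\Delta_\Lambda)=E+\overline{R^{(1)}_\Lambda}^{\,T}R^{(1)}_\Lambda$, which is exactly \eqref{HFI} sandwiched between $A$ and $\bar A^{T}$.

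To extract the claimed inequality, I would observe that for any finitely supported $A$, the quadratic form $A\,\overline{R^{(1)}_\Lambda}^{\,T}R^{(1)}_\Lambda\,\bar A^{T}=\|R^{(1)}_\Lambda \bar A^{T}\|_{\ell^2}^{2}\ge 0$, so \eqref{HFI} yields
\[
A(-\Delta_\Lambda)\bar A^{T}\ge AE\bar A^{T}=\sum_{n=1}^{\infty}\eta_n|A_n|^2.
\]
Finally, to identify the left-hand side with $\sum_n \Lambda^{2-c}_n\lambda_n^{-1}|A_n-A_{n-1}|^2$, I would expand $A(-\Delta_\Lambda)\bar A^{T}$ using the definition of $(-\Delta_\Lambda A)_n$ and reindex the term $\sum_n \frac{\Lambda^{2-c}_{n+1}}{\lambda_{n+1}}(|A_n|^2-\bar A_n A_{n+1})$ by $m=n+1$; together with the $A_0=0$ boundary condition the two sums combine into $\sum_n \frac{\Lambda^{2-c}_n}{\lambda_n}(|A_n|^2+|A_{n-1}|^2-\bar A_n A_{n-1}-\bar A_{n-1}A_n)$, which is the desired square. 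The only real obstacle is careful bookkeeping with indices and the boundary term at $n=0$; the positivity step and the matrix multiplication are routine once the $R^{(1)}_\Lambda$ factor is written down, since its entries were chosen precisely to make the off-diagonal matching automatic.
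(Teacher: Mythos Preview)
Your proposal is correct. It differs from the paper's argument in presentation rather than in substance: the paper treats the entries of $R^{(1)}_\Lambda$ as unknowns, posits the ansatz $(R^{(1)}_\Lambda A)_n=\big(\alpha_n A_n-\alpha_n^{-1}A_{n+1}\big)\sqrt{\Lambda^{2-c}_{n+1}/\lambda_{n+1}}$, substitutes into the identity, and solves the resulting recurrence to obtain $\alpha_n=\sqrt{\mu_{n+1}/\mu_n}$; you instead take the explicit $R^{(1)}_\Lambda$ already displayed in the statement and verify the factorization by computing $\overline{R^{(1)}_\Lambda}^{\,T}R^{(1)}_\Lambda$ directly. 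Your route is shorter and fully adequate as a proof of the theorem as stated, since $R^{(1)}_\Lambda$ is given; the paper's derivational approach has the complementary virtue of explaining \emph{how} one arrives at that factor in the first place, which matters for the later Rellich case where no closed form is available and one must genuinely solve a recurrence. One small bookkeeping point worth making explicit in your write-up is the $n=1$ diagonal entry: there the sum over rows of $R^{(1)}_\Lambda$ has only the $k=1$ contribution, so matching with $\eta_1$ relies on the convention $\mu_0=0$ implicit in the paper's setup.
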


\begin{proof}
It is observed that the operator $(-\Delta_\Lambda)-E$ has a tri-diagonal matrix representation. The method factorization enable us to write $(-\Delta_\Lambda)-E$ as a decomposition ${\overline{R}^{(1)}_{\Lambda}}^TR^{(1)}_{\Lambda}$ (see \cite{BGDKFS}, \cite{GESZLITT} and \cite{GESZLITTMIWEL}). To establish the identity (\ref{HFI}), it is only required to determine the exact expression for $R^{(1)}_{\Lambda}$ as the information about $(-\Delta_\Lambda)$, and $E$ is known to us. We claim that the remainder term $R^{(1)}_{\Lambda}$ acting on the sequence $\{A_{n}\}$ is of the following form:
\begin{center}
\begin{align*}
&(R^{(1)}_{\Lambda}A)_{n}=\Big(\sqrt\frac{\mu_{n+1}}{\mu_{n}\lambda_{n+1}}A_{n}-\sqrt\frac{\mu_{n}}{\mu_{n+1}\lambda_{n+1}}A_{n+1}\Big)\sqrt{\Lambda^{2-c}_{n+1}}.
\end{align*}
\end{center}
We suppose that the identity (\ref{HFI}), that is
\begin{align}{\label{HFI2}}
&\displaystyle\sum_{n=1}^{\infty}\Big|\frac{A_{n-1}-A_{n}}{\sqrt\lambda_{n}}\Big|^{2}\Lambda^{2-c}_{n}=
\displaystyle\sum_{n=1}^{\infty}\eta_{n}|A_{n}|^{2}+ \displaystyle\sum_{n=1}^{\infty}|(R^{(1)}_{\Lambda}A)_{n}|^{2}
\end{align} holds, where we assume that
\begin{align}{\label{HFR}}
&(R^{(1)}_{\Lambda}A)_{n}=\Big(\frac{\alpha_{n}A_{n}}{\sqrt\lambda_{n+1}}-\frac{A_{n+1}}{\alpha_{n}\sqrt\lambda_{n+1}}\Big)\sqrt{\Lambda^{2-c}_{n+1}}.     
\end{align}
Plugging the assumed value of $(R^{(1)}_{\Lambda}A)_{n}$ from (\ref{HFR}) in (\ref{HFI2}), then after simplifications and comparing the coefficients, we get a recurrence relation as below:
\begin{align*}
&\alpha^{2}_{1}=\frac{\mu_{2}}{\mu_{1}},
\end{align*}
and
\begin{align*}
&\frac{\mu_{n-1}\Lambda^{2-c}_{n}}{\mu_{n}\lambda_{n}}+\frac{\mu_{n+1}\Lambda^{2-c}_{n+1}}{\mu_{n}\lambda_{n+1}}=\frac{\alpha^{2}_{n}\Lambda^{2-c}_{n+1}}
{\lambda_{n+1}}+\frac{\Lambda^{2-c}_{n}}{\alpha^{2}_{n-1}\lambda_{n}} ~~~\forall ~n\geq2
\end{align*}
Using the value of $\alpha^{2}_{1}$, we get from above the value $\alpha^{2}_{2}$, and so on. In fact, we have
\begin{center}
$\alpha_{n}=\sqrt\frac{\mu_{n+1}}{\mu_{n}}$ $\forall n\in\mathbb{N}$.
\end{center}
Inserting the value of $\alpha_{n}$ in (\ref{HFR}), we get the exact expression for $R^{(1)}_{\Lambda}$, and this proves our claim.
\end{proof}
An important application of Theorem \ref{THMIDHI} is the following improvement of the generalized discrete Hardy's inequality (\ref{GDHI}) in a particular case. In fact, when one chooses
$q_{n}=n$ in (\ref{GDHI}) then we get
\begin{align}{\label{GDHIPC}}
\displaystyle\sum_{n=1}^{\infty}\frac{|A_{n}|^{2}}{n(n+1)^{2}}\leq\displaystyle\sum_{n=1}^{\infty}n|a_{n}|^{2}.
\end{align}
Surprisingly, we observed that the inequality (\ref{GDHIPC}) admits an improvement. Indeed, we have the following result.
\begin{Corollary}
Suppose that $A\in C_c(\mathbb{N}_0)$ with $A_{0}=0$. Then the following inequality holds:
\begin{align}
\displaystyle\sum_{n=1}^{\infty}\frac{|A_{n}|^{2}}{n(n+1)^{2}}<\displaystyle\sum_{n=1}^{\infty}\eta_{n}|A_{n}|^{2}\leq\displaystyle\sum_{n=1}^{\infty} n|a_{n}|^{2},
\end{align}where $\eta_{n}=\frac{1}{n^2(n+1)}>\frac{1}{n(n+1)^{2}}$, $n\in\mathbb{N}$.
\end{Corollary}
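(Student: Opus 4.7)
The plan is to realize the Corollary as a direct specialization of Theorem \ref{THMIDHI}. The natural choice of parameters is
\[
c=2,\qquad \lambda_n=n,\qquad \mu_n=n,
\]
so that $\Lambda_n^{2-c}=1$ and $\Lambda_n^{2-c}/\lambda_n = 1/n$. With this choice the left-hand side of the identity in Theorem \ref{THMIDHI} collapses to $\sum_{n=1}^{\infty}|A_n-A_{n-1}|^2/n$. Since the substitution $q_n=n$ in the generalized Hardy setting gives $A_n-A_{n-1}=n\,a_n$, we have $\sum_{n=1}^{\infty}n|a_n|^2=\sum_{n=1}^{\infty}|A_n-A_{n-1}|^2/n$, so the bounding expression matches the right-hand side of (\ref{GDHIPC}).

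The core computation is the explicit evaluation of the resulting weight. For $n\geq 2$ I would compute
\[
\eta_n=\frac{1}{n}+\frac{1}{n+1}-\frac{n-1}{n\cdot n}-\frac{n+1}{(n+1)\cdot n}
=\frac{1}{n+1}-\frac{n-1}{n^2}
=\frac{n^2-(n-1)(n+1)}{n^2(n+1)}
=\frac{1}{n^2(n+1)},
\]
and handle $n=1$ separately using the matrix form of $R^{(1)}_{\Lambda}$ (which avoids $\mu_0$ altogether), giving $\eta_1=1+\tfrac12-\tfrac{\mu_2\Lambda_2^0}{\lambda_2\mu_1}=1+\tfrac12-1=\tfrac12=\tfrac{1}{1^2\cdot 2}$. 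The strict pointwise bound $\eta_n>1/(n(n+1)^2)$ is equivalent to $n+1>n$ and is therefore immediate, and summation against $|A_n|^2$ (with $A$ nonzero and finitely supported) preserves strictness in the first inequality. The second inequality in the Corollary is precisely the displayed consequence of Theorem \ref{THMIDHI}.

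The single technical obstacle is that the choice $\mu_n=n$ fails the strict positivity hypothesis at $n=0$. I see two ways to dispose of this, and I would use whichever reads cleaner. The quick route is to observe that the matrices $(-\Delta_{\Lambda})$ and $R^{(1)}_{\Lambda}$ (as displayed in the statement of Theorem \ref{THMIDHI}) involve no division by $\mu_0$ whatsoever, so the factorization identity remains valid under the Dirichlet convention $\mu_0=0$ (consistent with $A_0=0$). The safer route is a limiting argument: apply Theorem \ref{THMIDHI} to $\mu_n^{(\varepsilon)}=n+\varepsilon$ with $\varepsilon>0$, note that for $n\geq 2$ the weight equals $\eta_n^{(\varepsilon)}=\frac{1}{n(n+1)(n+\varepsilon)}$ and $\eta_1^{(\varepsilon)}=\frac{1}{2(1+\varepsilon)}$ by the same algebra as above, and let $\varepsilon\to 0^+$. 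Since $A\in C_c(\mathbb{N}_0)$ the relevant sums are finite, so the pointwise convergence $\eta_n^{(\varepsilon)}\to\frac{1}{n^2(n+1)}$ passes to the limit and yields the claimed inequality.
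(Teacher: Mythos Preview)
Your proof is correct and follows essentially the same route as the paper: specialize Theorem \ref{THMIDHI} with $c=2$, $\lambda_n=n$, $\mu_n=n$, and compute $\eta_n=\frac{1}{n^2(n+1)}$. If anything, you are more careful than the paper, which simply writes ``choose $\mu_n=n$'' without commenting on the $\mu_0=0$ issue; your observation that the matrix $R^{(1)}_{\Lambda}$ never involves $\mu_0$ (or alternatively the $\mu_n^{(\varepsilon)}=n+\varepsilon$ limiting argument) cleanly closes that gap.
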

\begin{proof}
Let us put $c=2$ in (\ref{HFI2}), we get
\begin{align}{\label{GDHIPC1}}
&\displaystyle\sum_{n=1}^{\infty}\frac{\big|A_{n-1}-A_{n}\big|^{2}}{\lambda_{n}}\geq
\displaystyle\sum_{n=1}^{\infty}\eta_{n}\big|A_{n}\big|^{2},
\end{align}where $A_n=\lambda_1a_1+\ldots+\lambda_na_n$. Now choose $\lambda_{n}=n$ in (\ref{GDHIPC1}), and $\mu_n=n$, $c=2$ in $\eta_n$ of Theorem \ref{THMIDHI}, one gets
\begin{align}{\label{GDHIPC2}}
&\displaystyle\sum_{n=1}^{\infty}n\big|a_n\big|^{2}\geq
\displaystyle\sum_{n=1}^{\infty}\eta_{n}|A_{n}|^{2}>\sum_{n=1}^{\infty}\frac{|A_{n}|^{2}}{n(n+1)^{2}}.
\end{align}Hence the proof.
\end{proof}
We have another result (without proof) from Theorem \ref{THMIDHI} in case of $c=2$. Indeed, when one chooses $c=2$ then operator $(-\Delta_{\Lambda})$ reduced to the operator $(-\Delta_{\lambda})$, and we replace $R^{(1)}_{\Lambda}$ by $R^{(1)}_{\lambda}$. The following corollary demonstrates our result (see Theorem 2.1, \cite{DASMAN}).
\begin{Corollary}{\label{CORDASMAN}}
Let $A\in C_c(\mathbb{N}_0)$ be such that $A_{0}=0$. Suppose further that $\mu=\{\mu_{n}\}$ be any strictly positive sequence of real numbers and $S=diag\{\sigma_1, \sigma_2, \ldots\}$. Then we have
\begin{align*}
&A(-\Delta_\lambda)\bar A^{T}=AS\bar A^{T}+A({\overline{R}^{(1)}_{\lambda}}^TR^{(1)}_{\lambda})\bar A^{T},
\end{align*}which means the following inequality holds:
\begin{align}{\label{GIMHI}}
\displaystyle\sum_{n=1}^{\infty}\sigma_{n}|A_n|^{2}\leq\displaystyle\sum_{n=1}^{\infty}\frac{|A_{n}-A_{n-1}|^{2}}{\lambda_{n}},
\end{align} where
\begin{align*}
\sigma_{n}=\frac{((-\Delta_{\lambda})\mu)_n}{\mu_n}& =\frac{1}{\lambda_{n}}+\frac{1}{\lambda_{n+1}}-\frac{\mu_{n-1}}{\lambda_{n}\mu_{n}}-\frac{\mu_{n+1}}{\lambda_{n+1}\mu_{n}}.
\end{align*}
\end{Corollary}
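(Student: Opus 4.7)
The plan is to derive this corollary as the direct specialization of Theorem~\ref{THMIDHI} at the parameter value $c = 2$. Under this choice every factor $\Lambda_n^{2-c}$ collapses to $\Lambda_n^{0} = 1$, which removes all dependence on the cumulative sums $\Lambda_n$ and leaves only the individual weights $1/\lambda_n$. In particular the elongated Dirichlet Laplacian $(-\Delta_\Lambda)$ reduces to the weighted Dirichlet Laplacian $(-\Delta_\lambda)$, whose tri-diagonal matrix has $1/\lambda_n + 1/\lambda_{n+1}$ on the main diagonal and $-1/\lambda_{n+1}$ on the adjacent diagonals.

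With this substitution, the weight $\eta_n$ from Theorem~\ref{THMIDHI} becomes
\begin{align*}
\eta_n \;=\; \frac{1}{\lambda_n} + \frac{1}{\lambda_{n+1}} - \frac{\mu_{n-1}}{\lambda_n \mu_n} - \frac{\mu_{n+1}}{\lambda_{n+1}\mu_n},
\end{align*}
which is exactly the $\sigma_n$ appearing in the corollary, so the diagonal matrix $E$ of Theorem~\ref{THMIDHI} becomes the diagonal matrix $S$. In parallel, the remainder factor $R^{(1)}_{\Lambda}$ simplifies to $R^{(1)}_{\lambda}$ with diagonal entries $\sqrt{\mu_{n+1}/(\mu_n \lambda_{n+1})}$ and super-diagonal entries $-\sqrt{\mu_n/(\mu_{n+1}\lambda_{n+1})}$. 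Consequently the factorization identity $A(-\Delta_{\Lambda})\bar{A}^{T}= AE\bar{A}^{T}+A(\overline{R^{(1)}_{\Lambda}}^{T}R^{(1)}_{\Lambda})\bar{A}^{T}$ of Theorem~\ref{THMIDHI} passes verbatim into the claimed factorization of $(-\Delta_\lambda)$.

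The inequality then follows by taking real parts: the remainder contributes
\begin{align*}
A\bigl(\overline{R^{(1)}_{\lambda}}^{T}R^{(1)}_{\lambda}\bigr)\bar{A}^{T} \;=\; \sum_{n=1}^{\infty}\bigl|(R^{(1)}_{\lambda}A)_{n}\bigr|^{2} \;\geq\; 0,
\end{align*}
while the left-hand side $A(-\Delta_{\lambda})\bar{A}^{T}$ rewrites, by Abel summation together with the assumption $A_{0}=0$ and the finite support of $A$, as $\sum_{n=1}^{\infty} |A_{n}-A_{n-1}|^{2}/\lambda_{n}$. Combining these two observations with the factorization yields the stated inequality.

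There is no genuine obstacle here beyond the bookkeeping of the substitution $c=2$ in the formulas of Theorem~\ref{THMIDHI}, together with the standard summation-by-parts identity for the quadratic form of the weighted discrete Dirichlet Laplacian. This is why the corollary is stated in the paper without proof; the content is entirely contained in Theorem~\ref{THMIDHI}.
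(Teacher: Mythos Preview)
Your proposal is correct and matches the paper's own treatment: the paper explicitly states this corollary ``without proof'' as the case $c=2$ of Theorem~\ref{THMIDHI}, which is precisely the specialization you carry out. The bookkeeping you describe (collapse of $\Lambda_n^{2-c}$ to $1$, identification of $\eta_n$ with $\sigma_n$, and reduction of $R^{(1)}_{\Lambda}$ to $R^{(1)}_{\lambda}$) is exactly what is intended.
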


\section{Improvement of the Copson inequality}
In this section, we prove that the Copson inequality (\ref{COPI}) admits an improvement in the special case of $c=\frac{3}{2}$. Indeed, we establish an improved Copson inequality (\ref{GDHIQN2CN3BY2}). The statement of our result is given as follows.
\begin{thm}{\label{THMGDHIQN2CN3BY2}}
Let $\{A_n\}$ be a sequence of complex numbers such that $A_{0}=0$. Then we have
\begin{align}{\label{ineqimvd}}
\displaystyle\sum_{n=1}^{\infty}\frac{1}{16}\frac{n^{2}}{\widetilde{S_{n}}^{\frac{3}{2}}}|A_n|^{2}<\displaystyle\sum_{n=1}^{\infty}\widetilde{V}_{n}|A_n|^{2}
&\leq\displaystyle\sum_{n=1}^{\infty}
\sqrt{\widetilde{S}_n}\frac{|A_{n}-A_{n-1}|^{2}}{n^{2}},
\end{align}where the sequence $\widetilde{V}_{n}$ is defined as below:
\begin{align*}
\widetilde{V}_{n}&= \frac{((-\Delta_\lambda)n^{3/4})}{n^{3/4}}=\frac{\sqrt{\widetilde{S}_n}}{n^{2}}+\frac{\sqrt{\widetilde{S}_{n+1}}}{(n+1)^{2}}-\frac{\sqrt{\widetilde{S}_n}}{n^{2}}
\Big(1-\frac{1}{n}\Big)^{\frac{3}{4}}-
\frac{\sqrt{\widetilde{S}_{n+1}}}{(n+1)^{2}}\Big(1+\frac{1}{n}\Big)^{\frac{3}{4}},
\end{align*}with $\lambda=\{\lambda_n\}$, $\lambda_n=\frac{n^2}{\sqrt{\widetilde{S}_n}}$, and $\widetilde{S}_n=\frac{n(n+1)(2n+1)}{6}$. Further, the improved Copson weight $\widetilde{V}_{n}$ is optimal (`critical').
\end{thm}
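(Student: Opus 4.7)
The plan has three parts, mirroring the three assertions of the theorem: first the middle inequality, then the strict pointwise lower bound, and finally the criticality of $\widetilde{V}_n$.

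For the middle inequality, the idea is to apply Corollary~\ref{CORDASMAN} with the specific choices $\lambda_n = n^2/\sqrt{\widetilde{S}_n}$ and $\mu_n = n^{3/4}$. With this choice, $1/\lambda_n = \sqrt{\widetilde{S}_n}/n^2$ and $\mu_{n\pm 1}/\mu_n = (1 \pm 1/n)^{3/4}$, so the general weight $\sigma_n$ produced by Corollary~\ref{CORDASMAN} reduces exactly to the expression given for $\widetilde{V}_n$ in the statement. The right-hand side of (\ref{GIMHI}) becomes $\sum_n \sqrt{\widetilde{S}_n} |A_n - A_{n-1}|^2/n^2$, which gives the middle inequality immediately, with no further work.

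For the strict inequality $\widetilde{V}_n > \frac{1}{16} n^2/\widetilde{S}_n^{3/2}$, I plan a direct pointwise verification. The base case $n=1$ is checked by substitution. For $n\geq 2$, I expand $(1 \pm 1/n)^{3/4}$ via the binomial series together with the explicit form $\widetilde{S}_n = n(n+1)(2n+1)/6$, and I use the expansion $\sqrt{\widetilde{S}_{n+1}}/(n+1)^2 - \sqrt{\widetilde{S}_n}/n^2$ up to the order that matters. By design of $\mu_n = n^{3/4}$, the leading $O(1/n)$ contributions cancel (this is precisely the heuristic that $\mu_n = n^{3/4}$ is the Copson ground-state exponent for $c=3/2$), and the surviving term is of the order of the Copson weight itself but with a strictly larger coefficient. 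The main obstacle will be establishing positivity of the remainder uniformly in $n$: for small $n$ I intend to check finitely many cases by hand, while for large $n$ an elementary auxiliary inequality of the form $(1-x)^{3/4} + (1+x)^{3/4} < 2 - \frac{3}{16} x^2 - c_0 x^4$ on $0 < x \leq 1$, with an explicit $c_0 > 0$, should close the gap after multiplication by the appropriate prefactors.

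For criticality of $\widetilde{V}_n$, I would invoke the criticality criterion established in \cite{DASMAN} for weights of the form $\sigma_n = ((-\Delta_\lambda)\mu)_n/\mu_n$. By construction, $\mu_n = n^{3/4}$ is a positive solution of $((-\Delta_\lambda) - \widetilde{V})\mu = 0$, which provides the ``ground-state'' half of the criterion. It then remains to verify the divergence/Agmon-type hypothesis: since $\lambda_n = n^2/\sqrt{\widetilde{S}_n} \sim \sqrt{3/n}$ and $\mu_n^2 = n^{3/2}$, the natural series $\sum_n \mu_n^2/\lambda_n$ behaves like $\sum_n n^2$ and hence diverges, so $\mu$ is non-summable in the sense required. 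The principal difficulty here is matching the precise form of the criterion of \cite{DASMAN} to the present weighted setting and checking each of its hypotheses for this particular $(\lambda,\mu)$; the divergence of the associated series is the routine piece. Once both halves are in place, the criterion yields that $\widetilde{V}_n$ is critical, completing the proof.
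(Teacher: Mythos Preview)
Your first step, deducing the right-hand inequality from Corollary~\ref{CORDASMAN} with $\lambda_n=n^2/\sqrt{\widetilde S_n}$ and $\mu_n=n^{3/4}$, is exactly what the paper does.

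For the strict pointwise bound, the paper takes a more explicit algebraic route than your binomial-series plan. It writes $f(n)=\widetilde V_n-\tfrac{1}{16}n^2/\widetilde S_n^{3/2}$ as a single fraction with numerator $R(n)$, drops a harmless positive term to bound $R(n)>16n^{3/2}\bigl(F(n)-\tfrac{36}{16}n^{5/4}\bigr)$, and then proves two auxiliary lemmas $F(n)>G(n)$ and $G(n)>\tfrac{36}{16}n^{5/4}$ via AM--GM. Your idea is viable in spirit, but be aware that the two prefactors $\sqrt{\widetilde S_n}/n^2$ and $\sqrt{\widetilde S_{n+1}}/(n+1)^2$ differ, so the relevant combination is not the symmetric quantity $(1-1/n)^{3/4}+(1+1/n)^{3/4}$ with a common coefficient; the asymmetry has to be controlled separately, which is precisely what the paper's $F(n)$--$G(n)$ lemmas do by hand.

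For criticality, the paper does \emph{not} invoke the abstract criterion from \cite{DASMAN}. It uses the factorization identity (\ref{IDNCOPQN2}) directly: assuming another weight $\widetilde{\widetilde V}_n\ge\widetilde V_n$ also works, it builds an explicit cutoff sequence $A_n^N=\gamma_n^N\,n^{3/4}$ (with $\gamma_n^N$ equal to $1$ for $n<N$, a specific expression for $N\le n\le N^2$, and $0$ beyond) and shows the remainder term tends to $0$ as $N\to\infty$, forcing $\widetilde{\widetilde V}_n=\widetilde V_n$. Your plan to cite \cite{DASMAN} may well succeed, but your asymptotics are off: since $\widetilde S_n\sim n^3/3$ one has $\lambda_n\sim\sqrt3\,n^{1/2}$ (not $\sqrt{3/n}$), hence $\mu_n^2/\lambda_n\sim n/\sqrt3$ (not $n^2$). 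More importantly, the divergence condition that typically encodes criticality in this setting is $\sum\lambda_{n+1}/(\mu_n\mu_{n+1})=\infty$, which here behaves like $\sum 1/n$ and does diverge; you should identify and check that precise hypothesis rather than the series you wrote.
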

To prove this theorem, we require a sequence of lemma as provided below. Before stating these Lemma, we denote the following:
\begin{align*}
F(n)&=n^{\frac{1}{2}}(2n+3)^2\big(n^{\frac{3}{4}}-(n-1)^{\frac{3}{4}}\big)+(2n+1)^{3/2}\{(n+2)(2n+3)\}^{\frac{1}{2}}\big(n^{\frac{3}{4}}-(n+1)^{\frac{3}{4}}\big),\\
G(n)&=\Big\{n^{\frac{1}{2}}(2n+3)^2\Big(\frac{3}{4n^{\frac{1}{4}}}+\frac{3}{32n^{\frac{5}{4}}}\Big)+(2n+1)^{3/2}\{(n+2)(2n+3)\}^{\frac{1}{2}}\Big(\frac{3}{32n^{\frac{5}{4}}}-\frac{3}
{4n^{\frac{1}{4}}}\Big)\Big\}.
\end{align*}
\begin{Lemma}{\label{LEMGN}}
For any $n\in \mathbb{N}$, we have
$G(n)>\frac{36}{16}n^{\frac{5}{4}}$.
\end{Lemma}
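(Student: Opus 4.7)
The plan is to rewrite $G(n)$ in a compact closed form, reduce the target bound to a single polynomial inequality, and then verify it by squaring and expanding. Introduce the abbreviations
\begin{equation*}
P := n^{1/2}(2n+3)^{2}, \qquad Q := (2n+1)^{3/2}\bigl\{(n+2)(2n+3)\bigr\}^{1/2}.
\end{equation*}
Noting that $\tfrac{3}{4n^{1/4}} + \tfrac{3}{32 n^{5/4}} = \tfrac{3(8n+1)}{32 n^{5/4}}$ and $\tfrac{3}{32 n^{5/4}} - \tfrac{3}{4n^{1/4}} = -\tfrac{3(8n-1)}{32 n^{5/4}}$, one obtains the clean form
\begin{equation*}
G(n) \;=\; \frac{3}{32\,n^{5/4}}\Bigl[(8n+1)\,P \;-\; (8n-1)\,Q\Bigr].
\end{equation*}

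Since $\tfrac{36}{16}=\tfrac{9}{4}$, the inequality $G(n) > \tfrac{36}{16}\,n^{5/4}$ is equivalent to
\begin{equation*}
(8n+1)\,P \;-\; (8n-1)\,Q \;>\; 24\,n^{5/2}.
\end{equation*}
Using the identity $(8n+1)(2n+3)^{2} - 24\, n^{2} = 32 n^{3} + 76 n^{2} + 84 n + 9$ to absorb the $24\,n^{5/2}$ term into the $P$-term, this becomes
\begin{equation*}
n^{1/2}\bigl(32 n^{3} + 76 n^{2} + 84 n + 9\bigr) \;>\; (8n-1)(2n+1)^{3/2}\bigl\{(n+2)(2n+3)\bigr\}^{1/2}.
\end{equation*}
Both sides are strictly positive for every $n\in\mathbb{N}$, so squaring preserves the inequality, and the target reduces to the purely polynomial statement
\begin{equation*}
n\bigl(32 n^{3} + 76 n^{2} + 84 n + 9\bigr)^{2} \;>\; (8n-1)^{2}\,(2n+1)^{3}\,(n+2)\,(2n+3).
\end{equation*}

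I expect both sides to be polynomials of degree $7$ in $n$ with matching $n^{7}$- and $n^{6}$-coefficients (this must hold in view of the asymptotic balance $G(n) \sim \tfrac{9}{4}\,n^{5/4}$), so that the difference collapses to a polynomial of degree at most $5$. A direct expansion will show that its coefficients at $n^{5}, n^{4}, n^{3}, n^{2}$ and $n$ are all strictly positive and that the absolute value of the constant term is dwarfed by the $n^{5}$-coefficient, which suffices to establish strict positivity for every $n \geq 1$. The only real obstacle is the bookkeeping in this final expansion of $(8n-1)^{2}(2n+1)^{3}(n+2)(2n+3)$ against the square of the quartic $n\,(32 n^{3}+76 n^{2}+84 n+9)^{2}$; no new idea is required, and the cancellation of the top two degrees is a built-in consistency check.
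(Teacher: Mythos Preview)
Your reduction is exactly the paper's: writing $G(n)=\dfrac{3}{32\,n^{5/4}}\bigl[(8n+1)P-(8n-1)Q\bigr]$ and absorbing $24n^{5/2}$ into the $P$-term via the identity $(8n+1)(2n+3)^2-24n^2=32n^3+76n^2+84n+9$ yields precisely the paper's $H(n)>0$, namely
\[
\sqrt{n}\,(32n^3+76n^2+84n+9)>(8n-1)(2n+1)^{3/2}\sqrt{(n+2)(2n+3)}.
\]
Where you diverge is only in the last step. You square both sides and compare two degree-$7$ polynomials; carrying this out gives
\[
\text{LHS}-\text{RHS}=3200n^{5}+8144n^{4}+7384n^{3}+1700n^{2}+134n-6,
\]
which is indeed positive for all $n\ge 1$, so your plan succeeds once the expansion is actually written down. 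The paper instead applies the AM--GM bound $\sqrt{n(n+2)(2n+1)(2n+3)}<2(n+1)^{2}$ directly to $H(n)$, which collapses the comparison to the much shorter $30n^{2}+n+2>0$. Your route is perfectly valid and mechanical; the paper's AM--GM shortcut trades the bookkeeping for a one-line finish.
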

\begin{proof}
A direct computation of the following difference gives
\begin{align*}
G(n)-\frac{36}{16}n^{\frac{5}{4}} &=\frac{3}{32n^{5/4}}H(n),
\end{align*}where
\begin{align*}
H(n)&=\sqrt{n}(32n^3+76n^2+84n+9)-(2n+1)(8n-1)\{(n+2)(2n+1)(2n+3)\}^{\frac{1}{2}}\\
& =\sqrt{n}\Big\{(32n^3+76n^2+84n+9)-\frac{(2n+1)(8n-1)}{n}\{n(n+2)(2n+1)(2n+3)\}^{\frac{1}{2}}\Big\}\\
& > \sqrt{n}\Big\{(32n^3+76n^2+84n+9)-\frac{(4n+2)(8n-1)}{n}(n+1)^2\Big\}~~~\mbox{(by A.M.-G.M. inequality)}\\
& =\frac{1}{\sqrt{n}}(30n^2+n+2)>0.
\end{align*}
This proves the Lemma.
\end{proof}

\begin{Lemma}{\label{LEMFNGN}}
Let $n\in\mathbb{N}$. Then we have the following inequality:
\begin{align*}
F(n)&>G(n).
\end{align*}
\end{Lemma}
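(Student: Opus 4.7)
The plan is to identify $G(n)$ as the second-order Taylor truncation (at $x=0$) of the two finite differences appearing in $F(n)$, and then to show that the positive Taylor error paired with the prefactor $A_1(n) := n^{1/2}(2n+3)^2$ dominates the negative Taylor error paired with $A_2(n) := (2n+1)^{3/2}\sqrt{(n+2)(2n+3)}$. First, I would rewrite $F(n) = A_1(n)\,B_1(n) - A_2(n)\,B_2(n)$ and $G(n) = A_1(n)\,T_1(n) - A_2(n)\,T_2(n)$, where $B_1(n) = n^{3/4} - (n-1)^{3/4}$, $B_2(n) = (n+1)^{3/4} - n^{3/4}$, and $T_1, T_2$ are the corresponding second-order Taylor approximations $T_1(n) = \frac{3}{4n^{1/4}} + \frac{3}{32\,n^{5/4}}$ and $T_2(n) = \frac{3}{4n^{1/4}} - \frac{3}{32\,n^{5/4}}$.

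Next, I would apply Taylor's theorem through fourth order around $x=0$ to $h(x) = (n+x)^{3/4}$, using the integral (Cauchy) form of the remainder. Since $h^{(4)}(x) = -\frac{135}{256}(n+x)^{-13/4}$ has constant negative sign, a direct calculation yields
\begin{align*}
B_1(n) - T_1(n) &= \frac{5}{128\,n^{9/4}} + \frac{45}{512}\int_0^1 (n-s)^{-13/4}(1-s)^3\,ds \;>\; \frac{5}{128\,n^{9/4}}, \\
B_2(n) - T_2(n) &= \frac{5}{128\,n^{9/4}} - \frac{45}{512}\int_0^1 (n+s)^{-13/4}(1-s)^3\,ds \;<\; \frac{5}{128\,n^{9/4}},
\end{align*}
for every $n \in \mathbb{N}$. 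The weight $(1-s)^3$ ensures both integrals converge even at $n=1$, where the first integrand reduces to $(1-s)^{-1/4}$ and is integrable on $[0,1]$.

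Combining these bounds with $A_1(n), A_2(n) > 0$ gives
\begin{align*}
F(n) - G(n) &= A_1(n)\bigl(B_1(n) - T_1(n)\bigr) - A_2(n)\bigl(B_2(n) - T_2(n)\bigr) \\
&> \frac{5}{128\,n^{9/4}}\bigl(A_1(n) - A_2(n)\bigr),
\end{align*}
so the lemma reduces to showing $A_1(n) > A_2(n)$ on $\mathbb{N}$. A direct expansion of the squares produces $A_1(n)^2 - A_2(n)^2 = (2n+3)\bigl[n(2n+3)^3 - (n+2)(2n+1)^3\bigr] = (2n+3)(8n^3 + 24n^2 + 14n - 2)$, which is manifestly positive for all $n \geq 1$.

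The main obstacle is the careful bookkeeping that extracts the common third-order Taylor coefficient $\tfrac{5}{128\,n^{9/4}}$ and verifies that the two fourth-order remainders carry opposite signs (positive for $B_1 - T_1$, negative for $B_2 - T_2$); the integral form of the remainder is chosen precisely because the Lagrange form becomes singular at the left endpoint when $n=1$. Once this sign analysis is in place, the closing polynomial inequality $A_1^2 > A_2^2$ is a short algebraic verification.
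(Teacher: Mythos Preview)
Your proof is correct and reaches the same endpoint as the paper, but via a different route. The paper factors $F(n)=n^{3/4}\bigl[A_1(n)\bigl(1-(1-\tfrac1n)^{3/4}\bigr)+A_2(n)\bigl(1-(1+\tfrac1n)^{3/4}\bigr)\bigr]$ and expands both brackets as \emph{full} binomial series in $1/n$; using the sign pattern of $\binom{3/4}{k}$ (positive for odd $k$, negative for even $k$) together with $A_1(n)>A_2(n)$, it shows that every single term of the resulting series is positive, and then identifies $G(n)$ as the sum of just the $k=1,2$ terms. You instead stop at the third Taylor coefficient $\tfrac{5}{128}n^{-9/4}$, control the two fourth-order integral remainders from above and below by that same quantity, and close with the identical algebraic fact $A_1(n)>A_2(n)$.

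Both arguments hinge on the same polynomial inequality $n(2n+3)^3>(n+2)(2n+1)^3$, so neither is deeper than the other. The paper's version is a bit slicker (no remainder integrals to carry) and proves the stronger statement that \emph{all} higher-order terms are positive. Your version, in exchange, is entirely finite---no infinite series, just a Taylor polynomial with explicit Cauchy remainder---and you correctly flag that the integral form is needed at $n=1$, where the binomial series is at the boundary of its convergence disk (the paper's proof glosses over this point).
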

\begin{proof}
We have
\begin{align*}
&F(n)=n^{3/4}\Big\{n^{\frac{1}{2}}(2n+3)^2\big(1-(1-\frac{1}{n})^{\frac{3}{4}}\big)+(2n+1)^{3/2}\sqrt{(n+2)(2n+3)}\big(1-(1+\frac{1}{n})^{\frac{3}{4}}\big)\Big\}
\end{align*}
Expanding $F(n)$ as an infinite series, we have
\begin{align*}
F(n)&=\displaystyle\sum_{k=1}^{\infty}(-1)^{k+1}\binom{\frac{3}{4}}{k}\frac{n^{\frac{3}{4}}}{n^{k}}\Big[n^{\frac{1}{2}}(2n+3)^2+(-1)^{k}(2n+1)^{3/2}\sqrt{(n+2)(2n+3)}\Big]\\
& =n^{\frac{3}{4}}\Big[n^{\frac{1}{2}}(2n+3)^2-(2n+1)^{3/2}\sqrt{(n+2)(2n+3)}\Big]\displaystyle\sum_{k\in \mbox{odd}} \binom{\frac{3}{4}}{k}\frac{1}{n^{k}}\\
& \hspace{1cm} + n^{\frac{3}{4}}\Big[n^{\frac{1}{2}}(2n+3)^2+(2n+1)^{3/2}\sqrt{(n+2)(2n+3)}\Big]\displaystyle\sum_{k\in \mbox{even}}-\binom{\frac{3}{4}}{k}\frac{1}{n^{k}}.
\end{align*}
Note that $\binom{\frac{3}{4}}{k}<0$ for all even $k\in\mathbb{N}$, and $\binom{\frac{3}{4}}{k}>0$ for all odd $k\in\mathbb{N}$. Also since $n(2n+3)^{3}>(n+2)(2n+1)^{3}$ holds for any $n\in \mathbb{N}$, so one gets
\begin{align*}
n^{\frac{1}{2}}(2n+3)^2>(2n+1)^{3/2}\sqrt{(n+2)(2n+3)},
\end{align*}which says that all terms of the R.H.S. of $F(n)$ are positive. Hence from above, we get
\begin{align*}
F(n)&>\displaystyle\sum_{k=1}^{2}(-1)^{k+1}\binom{\frac{3}{4}}{k}\frac{n^{\frac{3}{4}}}{n^{k}}\Big[n^{\frac{1}{2}}(2n+3)^2+(-1)^{k}(2n+1)^{3/2}\sqrt{(n+2)(2n+3)}\Big]\\
&=\Big\{n^{\frac{1}{2}}(2n+3)^2\Big(\frac{3}{4n^{\frac{1}{4}}}+\frac{3}{32n^{\frac{5}{4}}}\Big)+(2n+1)^{3/2}\sqrt{(n+2)(2n+3)}\Big(\frac{3}{32n^{\frac{5}{4}}}-
\frac{3}{4n^{\frac{1}{4}}}\Big)\Big\}\\
&=G(n).
\end{align*}
This proves the Lemma.
\end{proof}

\begin{Lemma}{\label{LEMVNTILDE}}
Suppose that $n\in \mathbb{N}$. Then we have
\begin{align*}
 &\widetilde{V}_{n}>\frac{1}{16}\frac{n^{2}}{\tilde{S_{n}}^{\frac{3}{2}}}.
\end{align*}
\end{Lemma}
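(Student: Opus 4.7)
The strategy is to rewrite the desired inequality in a form whose left-hand side is directly comparable with $F(n)$, so that Lemmas~\ref{LEMFNGN} and~\ref{LEMGN} can be chained to conclude.

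First I would use the identities $(1-1/n)^{3/4} = (n-1)^{3/4}/n^{3/4}$ and $(1+1/n)^{3/4} = (n+1)^{3/4}/n^{3/4}$ to factor out $1/n^{3/4}$ from $\widetilde{V}_n$, obtaining
\[
\widetilde{V}_{n} = \frac{1}{n^{3/4}}\left[\frac{\sqrt{\widetilde{S}_n}}{n^{2}}\bigl(n^{3/4}-(n-1)^{3/4}\bigr) + \frac{\sqrt{\widetilde{S}_{n+1}}}{(n+1)^{2}}\bigl(n^{3/4}-(n+1)^{3/4}\bigr)\right].
\]
Substituting $\widetilde{S}_n = n(n+1)(2n+1)/6$ and $\widetilde{S}_{n+1} = (n+1)(n+2)(2n+3)/6$, and noting that $\dfrac{n^{2}}{16\,\widetilde{S}_n^{3/2}} = \dfrac{6\sqrt{6}\,n^{1/2}}{16(n+1)^{3/2}(2n+1)^{3/2}}$, multiplying the proposed inequality through by the positive factor $n^{3/4}\sqrt{6}\,(n+1)^{3/2}(2n+1)^{3/2}$ reduces it to the equivalent statement
\[
\frac{(n+1)^{2}(2n+1)^{2}}{n^{3/2}}\bigl(n^{3/4}-(n-1)^{3/4}\bigr) + (2n+1)^{3/2}\sqrt{(n+2)(2n+3)}\,\bigl(n^{3/4}-(n+1)^{3/4}\bigr) > \frac{36}{16}\,n^{5/4}.
\]

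The key observation is that the second summand on the left-hand side is literally the second summand in $F(n)$. For the first summand, the elementary factorisation
\[
(n+1)^{2}(2n+1)^{2} - n^{2}(2n+3)^{2} = \bigl[(n+1)(2n+1) - n(2n+3)\bigr]\bigl[(n+1)(2n+1) + n(2n+3)\bigr] = 4n^{2}+6n+1 > 0
\]
yields $\dfrac{(n+1)^{2}(2n+1)^{2}}{n^{3/2}} > n^{1/2}(2n+3)^{2}$. Since $n^{3/4}-(n-1)^{3/4} > 0$, the first summand on the left-hand side strictly exceeds the first summand of $F(n)$, and hence the whole left-hand side is strictly greater than $F(n)$. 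Applying Lemma~\ref{LEMFNGN} and then Lemma~\ref{LEMGN} gives $F(n) > G(n) > \tfrac{36}{16}\,n^{5/4}$, which closes the argument.

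The main obstacle is not conceptual but algebraic bookkeeping: identifying the correct multiplicative factor that transforms the inequality into a form aligning coefficient-by-coefficient with $F(n)$. Once this is done, the slack in the first coefficient comes from the one-line identity $(n+1)(2n+1) - n(2n+3) = 1$, and the two previously established lemmas finish the proof.
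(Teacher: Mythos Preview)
Your proof is correct and follows essentially the same route as the paper: both reduce the inequality to showing that a certain expression exceeds $\tfrac{36}{16}n^{5/4}$, bound that expression from below by $F(n)$ via the coefficient comparison $(n+1)^{2}(2n+1)^{2}>n^{2}(2n+3)^{2}$ (equivalently $(2n^{2}+3n+1)^{2}>(2n^{2}+3n)^{2}$), and then chain Lemmas~\ref{LEMFNGN} and~\ref{LEMGN}. Your presentation is slightly tidier in that you multiply through by the positive factor $n^{3/4}\sqrt{6}\,(n+1)^{3/2}(2n+1)^{3/2}$ directly rather than first writing the difference $f(n)$ as a single fraction with numerator $R(n)$, but the substance is identical.
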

\begin{proof}
Let us choose $f(n)=\widetilde{V}_{n}-\frac{1}{16}\frac{n^{2}}{\tilde{S_{n}}^{\frac{3}{2}}}$. It is now enough to prove that $f(n)>0$. Observed that $f(n)$ can be written in the following simplified form.
\begin{align*}
f(n)&=\frac{\{n(n+1)(2n+1)\}^{\frac{1}{2}}}{\sqrt{6}n^{2}}+\frac{\{(n+1)(n+2)(2n+3)\}^{\frac{1}{2}}}{\sqrt6(n+1)^{2}}-\frac{\{n(n+1)(2n+1)\}^{\frac{1}{2}}}{\sqrt6n^{2}}\Big(1-\frac{1}{n}\Big)^{\frac{3}{4}}\\
&\hspace{0.5cm}-\frac{\{(n+1)(n+2)(2n+3)\}^{\frac{1}{2}}}{\sqrt6(n+1)^{2}}\Big(1+\frac{1}{n}\Big)^{\frac{3}{4}}-\frac{1}{16}\frac{{6\sqrt{6}}n^{2}}{\Big(n(n+1)(2n+1)\Big)^{\frac{3}{2}}}\\
& = \frac{n^{2}(n+1)^{2}R(n)}{16 \sqrt6 n^{\frac{11}{4}}(n+1)^{2}\big(n(n+1)(2n+1)\big)^{\frac{3}{2}}},
\end{align*}where $R(n)$ is given as follows:
\begin{align*}
R(n)&=16(2n^{2}+3n+1)^{2}\big(n^{\frac{3}{4}}-(n-1)^{\frac{3}{4}}\big)+16n^{\frac{9}{4}}(2n+1)(4n^{3}+16n^{2}+19n+6)^{\frac{1}{2}}\\
&\hspace{1cm}-16(n+1)^{\frac{1}{4}}(2n^{2}+n)^{\frac{3}{2}}(2n^{3}+9n^{2}+13n+6)^{\frac{1}{2}}-36n^{\frac{11}{4}}\\
&=16(2n^{2}+3n+1)^{2}\big(n^{\frac{3}{4}}-(n-1)^{\frac{3}{4}}\big)+16n^{\frac{3}{2}}(2n+1)^{\frac{3}{2}}\sqrt{(n+2)(2n+3)}\big(n^{\frac{3}{4}}-(n+1)^{\frac{3}{4}}\big)\\
&\hspace{1cm}-36n^{\frac{11}{4}}\\
&>16(2n^{2}+3n)^{2}\big(n^{\frac{3}{4}}-(n-1)^{\frac{3}{4}}\big)+16n^{\frac{3}{2}}(2n+1)^{\frac{3}{2}}\sqrt{(n+2)(2n+3)}\big(n^{\frac{3}{4}}-(n+1)^{\frac{3}{4}}\big)\\
&\hspace{1cm}-36n^{\frac{11}{4}}\\
&=16n^{\frac{3}{2}}\Big(F(n)-\frac{36}{16}n^{\frac{5}{4}}\Big)\\
&> 16n^{\frac{3}{2}}\Big(G(n)-\frac{36}{16}n^{\frac{5}{4}}\Big) ~~~(\mbox{by Lemma \ref{LEMFNGN}})\\
& >0 ~~~(\mbox{by Lemma \ref{LEMGN}}).
\end{align*}
Hence $f(n)>0$, and this establishes the desired result.
\end{proof}

\begin{proof} of \emph{\textbf{Theorem \ref{THMGDHIQN2CN3BY2}}}:
The proof of this result is an immediate consequence of our result stated in Corollary \ref{CORDASMAN}. In fact, when we choose $\lambda_{n}=\frac{n^{2}}{\sqrt{\widetilde{S}_n}}$ and $\mu_{n}=n^{\frac{3}{4}}$, then the corresponding weight sequence $\sigma_n$ becomes $\widetilde{V}_{n}$ as given above, and we have the following identity
\begin{align}{\label{IDNCOPQN2}}
&\displaystyle\sum_{n=1}^{\infty}\frac{|A_{n}-A_{n-1}|^{2}}{n^{2}}\sqrt{\widetilde{S}_n}=\displaystyle\sum_{n=1}^{\infty}\widetilde{V}_n|A_n|^{2}+\displaystyle\sum_{n=1}^{\infty}\Big|\big(\frac{n}{n+1}\big)^{\frac{3}{8}}A_{n+1}-\big(\frac{n+1}{n}\big)^{\frac{3}{8}}A_{n}
\Big|^{2}\frac{\sqrt{\widetilde{S}_{n+1}}}{(n+1)^{2}}.
\end{align}
Hence the R.H.S. of inequality (\ref{ineqimvd}) is proved. To establish the L.H.S. of inequality (\ref{ineqimvd}), it is sufficient to prove that for all $n\in\mathbb{N}$
\begin{align*}
 &\widetilde{V}_{n}>\frac{1}{16}\frac{n^{2}}{\tilde{S_{n}}^{\frac{3}{2}}} ~~\mbox{holds},
\end{align*}which immediately follows from Lemma \ref{LEMVNTILDE}.\\
Now we prove the \emph{criticality} of the sequence $\widetilde{V}_n$. For this let $\widetilde{\widetilde{V}}_n$ is a sequence such that $\widetilde{\widetilde{V}}_n\geq\widetilde{V}_n$
holds for all $n\in \mathbb{N}$ and inequality (\ref{ineqimvd}) holds for the sequence $\widetilde{\widetilde{V}}_n$. Then we get the following:
\begin{align}{\label{INEQWIDETILDEVN}}
\displaystyle\sum_{n=1}^{\infty}\Big|\big(\frac{n}{n+1}\big)^{\frac{3}{8}}A_{n+1}-\big(\frac{n+1}{n}\big)^{\frac{3}{8}}A_{n}
\Big|^{2}\frac{\sqrt{\widetilde{S}_{n+1}}}{(n+1)^{2}} &\geq \displaystyle\sum_{n=1}^{\infty}(\widetilde{\widetilde{V}}_n-\widetilde{V}_n)|A_{n}|^{2} \geq 0.
\end{align}
Since the above inequality vanishes for $A_n=n^{3/4}$, so we redefine the sequence $A^{N}_{n}$, $N\geq2$ as $A^{N}_{n}=\gamma^{N}_{n}n^{\frac{3}{4}}$ where $\gamma^{N}_{n}$ defined as below:
\begin{align*}
\gamma^{N}_{n} &= \left\{
\begin{array}{lll}
    1 & \quad \mbox{if~~} n<N,\\
    \frac {N\sqrt[4]{\widetilde{S}_{n}}\sqrt n- n}{N\sqrt[4]{\widetilde{S}_{n}}\sqrt n} & \quad \mbox{if~~} N\leq n\leq N^{2}\\
    0 & \quad \mbox{if~~} n>N^{2}.
\end{array}\right.
\end{align*}
Now using the above sequence, we obtain
\begin{align*}
&\displaystyle\sum_{n=1}^{\infty}\Big|\big(\frac{n}{n+1}\big)^{\frac{3}{8}}A_{n+1}^N-\big(\frac{n+1}{n}\big)^{\frac{3}{8}}A_{n}^N
\Big|^{2}\frac{\sqrt{\widetilde{S}_{n+1}}}{(n+1)^{2}}\\
&=\displaystyle\sum_{n=1}^{\infty}\big(n(n+1)\big)^{\frac{3}{4}}\Big|\gamma^{N}_{n+1}-\gamma^{N}_{n}\Big|^{2}\frac{\sqrt{\widetilde{S}_{n+1}}}{(n+1)^{2}}\\
&=\frac{1}{N^{2}}\displaystyle\sum_{n=N+1}^{N^{2}}\big(n(n+1)\big)^{\frac{3}{4}}\Big|\frac{n+1}{\sqrt[4]{\widetilde{S}_{n+1}}\sqrt{n+1}}-\frac{n}{\sqrt[4]{\widetilde{S}_{n}}\sqrt n}\Big|^{2}\frac{\sqrt{\widetilde{S}_{n+1}}}{(n+1)^{2}}\\
&\leq\frac{1}{N^{2}}\displaystyle\sum_{n=N+1}^{N^{2}}\frac{n^{\frac{3}{4}}}{(n+1)^{\frac{9}{4}}}\leq\frac{1}{N^{2}}\int_{N}^{N^{2}}\frac{\sqrt{n(n+1)}}{n^{2}}dn\leq \frac{1}{N^{2}}\int_{N}^{N^{2}}\frac{2n+1}{2n^2}dn=\frac{\log N}{N^2}+\frac{1}{2N}-\frac{1}{2N^2},
\end{align*}
which tends to $0$ as $N\rightarrow\infty$. Hence from (\ref{INEQWIDETILDEVN}), we get
\begin{center}
$\displaystyle\sum_{n=1}^{\infty}(\widetilde{\widetilde{V}}_n-\widetilde{V}_n)|A_{n}^N|^{2}=0$.
\end{center}
Therefore, $\widetilde{\widetilde{V}}_n= \widetilde{V}_n$ holds for all $n\in \mathbb{N}$, hence the sequence $\widetilde{V}_n$ is \emph{critical}. This completes proof of the theorem.
\end{proof}
Now it is contemplated to establish an improved Copson inequality (\ref{GDHIQN3CN3BY2}). The improved Copson inequality (\ref{GDHIQN3CN3BY2}) with a different weight sequence $\widehat{V}_{n}$ is reads as follows.

\begin{thm}
Suppose $\{a_n\}$ is any sequence of complex numbers. Then we have
\begin{align}{\label{COPIMPQN3}}
\displaystyle\sum_{n=1}^{\infty}\frac{1}{16}\frac{n^{3}}{\widehat{S}^{\frac{3}{2}}_{n}}|A_n|^{2}<\displaystyle\sum_{n=1}^{\infty}\widehat{{V}}_{n}|A_n|^{2}
&\leq\displaystyle\sum_{n=1}^{\infty}\sqrt{\widehat{S}_n}\frac{|A_{n}-A_{n-1}|^{2}}{n^{3}},
\end{align}where the sequence $\widehat{V}_{n}$ is defined as below:
\begin{align*}
\widehat{V}_{n}&= \frac{((-\Delta_\lambda)n)}{n}=\frac{\sqrt{\widehat{S}_n}}{n^{3}}+\frac{\sqrt{\widehat{S}_{n+1}}}{(n+1)^{3}}-\frac{\sqrt{\widehat{S}_n}}{n^{3}}
\Big(1-\frac{1}{n}\Big)-\frac{\sqrt{\widehat{S}_{n+1}}}{(n+1)^{3}}\Big(1+\frac{1}{n}\Big),
\end{align*}with $\lambda=\{\lambda_n\}$, $\lambda_n=\frac{n^3}{\sqrt{\widehat{S}_n}}$, $\widehat{S}_n=\Big(\frac{n(n+1)}{2}\Big)^{2}$, and $A_{n}=a_{1}+8a_2+\ldots+n^3a_n$, $A_{0}=0$. Moreover, the improved Copson weight $\widehat{V}_{n}$ is `critical'.
\end{thm}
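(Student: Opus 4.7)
The plan is to apply Corollary \ref{CORDASMAN} with the specific choices $\lambda_n=\frac{n^3}{\sqrt{\widehat{S}_n}}$ and $\mu_n=n$, noting that $\sqrt{\widehat{S}_n}=\frac{n(n+1)}{2}$, so $\lambda_n=\frac{2n^2}{n+1}$ is a genuinely positive sequence as required. With these substitutions the weight $\sigma_n$ produced by the corollary coincides with the sequence $\widehat{V}_n$ in the statement, and Corollary \ref{CORDASMAN} immediately supplies the identity
$$\sum_{n=1}^{\infty}\sqrt{\widehat{S}_n}\frac{|A_{n}-A_{n-1}|^{2}}{n^{3}} = \sum_{n=1}^{\infty}\widehat{V}_n |A_n|^2 + \sum_{n=1}^{\infty}|(R^{(1)}_\lambda A)_n|^2,$$
which instantly yields the right-hand inequality in (\ref{COPIMPQN3}).

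Next I would establish the pointwise bound $\widehat{V}_n>\frac{1}{16}\frac{n^{3}}{\widehat{S}_n^{3/2}}$. In contrast to Theorem \ref{THMGDHIQN2CN3BY2}, where $\mu_n=n^{3/4}$ forced a binomial series expansion and the auxiliary Lemmas \ref{LEMGN}–\ref{LEMVNTILDE}, here the choice $\mu_n=n$ makes everything algebraic. A direct simplification shows $\widehat{V}_n=\frac{n^2+3n+1}{2n^3(n+1)^2}$ while $\frac{1}{16}\frac{n^3}{\widehat{S}_n^{3/2}}=\frac{1}{2(n+1)^3}$, so the desired inequality reduces to $(n^2+3n+1)(n+1)>n^3$, i.e.\ $4n^2+4n+1>0$, which is trivially true for $n\in\mathbb{N}$. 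No separate lemmas are required.

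For the criticality of $\widehat{V}_n$ I would mimic the approximation argument of Theorem \ref{THMGDHIQN2CN3BY2}. Observe that $\mu_n=n$ is the null solution: a direct verification shows $(R^{(1)}_\lambda \mu)_n=0$, so the remainder in the identity vanishes on $A_n=n$, confirming that $\mu$ plays the role of a ground state. Given any $\widetilde{\widetilde{V}}_n\geq \widehat{V}_n$ for which the inequality still holds, subtracting the identity yields
$$\sum_{n=1}^{\infty}|(R^{(1)}_\lambda A^N)_n|^2 \;\geq\; \sum_{n=1}^{\infty}(\widetilde{\widetilde{V}}_n-\widehat{V}_n)|A^N_n|^2\geq 0$$
for any test sequence $A^N$. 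Taking $A^N_n=\gamma^N_n\cdot n$ with a plateau-and-cutoff profile $\gamma^N_n$ (equal to $1$ for $n<N$, decaying smoothly to $0$ over $N\leq n\leq N^2$, and $0$ for $n>N^2$, modeled on the $\gamma^N_n$ of Theorem \ref{THMGDHIQN2CN3BY2}), the remainder reduces to telescoping differences $\gamma^N_{n+1}-\gamma^N_n$ weighted by $\frac{n(n+2)}{2(n+1)}$, which I would bound by an integral of order $\frac{\log N}{N^2}$, tending to zero as $N\to\infty$. This forces $\widetilde{\widetilde{V}}_n=\widehat{V}_n$ for each $n$, proving criticality.

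The main obstacle is not in Steps 1 or 2, which are essentially mechanical once the right $(\lambda,\mu)$ is chosen, but in selecting the cutoff profile $\gamma^N_n$ carefully enough to make the remainder estimate tend to zero; the choice must be compatible with the growth of $\mu_n=n$ so that the telescoping differences produce a summable quantity vanishing with $N$. This is a direct analogue of the delicate step in the proof of Theorem \ref{THMGDHIQN2CN3BY2}, but is simpler here because $\mu_n$ and the normalizing factor are polynomial rather than fractional powers.
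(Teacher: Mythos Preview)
Your proposal is correct and follows essentially the same approach as the paper: apply Corollary~\ref{CORDASMAN} with $\lambda_n=n^3/\sqrt{\widehat{S}_n}$ and $\mu_n=n$, verify the pointwise bound via the computation $\widehat{V}_n-\tfrac{1}{2(n+1)^3}=\tfrac{4n^2+4n+1}{2n^3(n+1)^3}>0$, and prove criticality by a cutoff approximation of the null sequence $\mu_n=n$. The only difference is cosmetic: the paper uses the simpler explicit cutoff $\psi^N_n=1-\tfrac{1}{Nn}$ on $N\le n\le N^2$ rather than the profile from Theorem~\ref{THMGDHIQN2CN3BY2}, but the structure of the argument is identical.
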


\begin{proof}
We omit the proof of R.H.S part of the inequality (\ref{COPIMPQN3}) as it is immediately follows from Corollary \ref{CORDASMAN} by choosing $\lambda_{n}=\frac{n^{3}}{\sqrt{\widehat{S}_n}}$ and $\mu_{n}=n$. Therefore, it is only remained to establish the L.H.S part of the inequality (\ref{COPIMPQN3}). For this, we define $h(n)$, $n\in \mathbb{N}$ as below and a direct computation gives the following.
\begin{align*}
h(n)&={\widehat{V}_{n}}-\frac{1}{16}\frac{n^{3}}{\widehat{S}^{\frac{3}{2}}_{n}}\\
&=\frac{n+1}{2n^{2}}+\frac{n+2}{2(n+1)^{2}}-\frac{n+1}{2n^{2}}\Big(1-\frac{1}{n}\Big)-\frac{n+2}{2(n+1)^{2}}\Big(1+\frac{1}{n}\Big)-\frac{1}{2(n+1)^{3}}\\
&=\frac{(n+1)^{4}-(n^{3}+n^{2})(n+2)-n^{3}}{2(n+1)^{3}n^{3}}=\frac{4n^{2}+4n+1}{2(n+1)^{3}n^{3}}>0,
\end{align*}
which proves the desired inequality. We now establish the optimality of the weight sequence $\widehat{V}_{n}$. Note that for $\lambda_{n}=\frac{n^{3}}{\sqrt{\widehat{S}_n}}$ and $\mu_{n}=n$, we get an identity from Corollary \ref{CORDASMAN} as below
\begin{align}{\label{IDNCOPQN3}}
&\displaystyle\sum_{n=1}^{\infty}\frac{|A_{n}-A_{n-1}|^{2}}{n^{3}}\sqrt{\widehat{S}_n}=\displaystyle\sum_{n=1}^{\infty}\widehat{V}_n|A_n|^{2}
+\displaystyle\sum_{n=1}^{\infty}\Big|\sqrt{\frac{n}{n+1}}A_{n+1}-\sqrt{\frac{n+1}{n}}A_{n}\Big|^{2}\frac{\sqrt{\widehat{S}_{n+1}}}{(n+1)^{3}}.
\end{align}
To establish the `\emph{criticality}' of the sequence $\widehat{V}_n$, we choose an another weight sequence $\widehat{\widehat{V}}_n$ such that $\widehat{\widehat{V}}_n\geq\widehat{V}_n$
holds for all $n\in \mathbb{N}$, and satisfy the inequality (\ref{IDNCOPQN3}. Then we have the following:
\begin{align}{\label{INEQWIDEHATVN}}
\displaystyle\sum_{n=1}^{\infty}\Big|\sqrt{\frac{n}{n+1}}A_{n+1}-\sqrt{\frac{n+1}{n}}A_{n}
\Big|^{2}\frac{\sqrt{\widehat{S}_{n+1}}}{(n+1)^{3}} &\geq \displaystyle\sum_{n=1}^{\infty}(\widehat{\widehat{V}}_n-\widehat{V}_n)|A_{n}|^{2} \geq 0.
\end{align}
Observed that the above inequality vanishes when $A_n=n$. Therefore, to regularize the sequence $A^{N}_{n}$, $N\geq2$ we redefine $A^{N}_{n}=\psi^{N}_{n}n$ where $\psi^{N}_{n}$ is defined as below:
\begin{align*}
\psi^{N}_{n} &= \left\{
\begin{array}{lll}
    1 & \quad \mbox{if~~} n<N,\\
    1-\frac {1}{Nn} & \quad \mbox{if~~} N\leq n\leq N^{2}\\
    0 & \quad \mbox{if~~} n>N^{2}.
\end{array}\right.
\end{align*}
Now using the above sequence, we get
\begin{align*}
&\displaystyle\sum_{n=1}^{\infty}\Big|\sqrt{\frac{n}{n+1}}A_{n+1}^N-\sqrt{\frac{n+1}{n}}A_{n}^N
\Big|^{2}\frac{\sqrt{\hat{S}_{n+1}}}{(n+1)^{3}}\\
&=\displaystyle\sum_{n=1}^{\infty}n(n+1)\Big|\psi^{N}_{n+1}-\psi^{N}_{n}\Big|^{2}\frac{\sqrt{\hat{S}_{n+1}}}{(n+1)^{3}}\\
&<\frac{1}{N^{2}}\displaystyle\sum_{n=N+1}^{N^{2}}\frac{1}{n}\leq\frac{1}{N^{2}}\int_{N}^{N^{2}}\frac{1}{n}dn= \frac{\log N}{N^2}\rightarrow 0 ~~\mbox{as}~~N\rightarrow\infty.
\end{align*}
Hence from inequality (\ref{INEQWIDEHATVN}), we get $\displaystyle\sum_{n=1}^{\infty}(\widehat{\widehat{V}}_n-\widehat{V}_n)|A_{n}^N|^{2}=0$. Therefore, $\widehat{\widehat{V}}_n= \widehat{V}_n$ holds for all $n\in \mathbb{N}$, hence the sequence $\widehat{V}_n$ is optimal. This completes the proof.
\end{proof}

To avoid complications, and large computations we will choose $c=2$ in the rest of the work. We note that the operator $(-\Delta_{\Lambda})$ has now become $(-\Delta_{\lambda})$, and we consider the square of the Dirichlet Laplacian operator $(-\Delta_{\lambda})^{2}$ to get extended improved discrete Rellich inequalities in one-dimension.
\section{Extension of improved Rellich inequality}
We have already mentioned in Section 1, that similar to the case of discrete Hardy's inequality, the discrete Rellich inequality (\ref{DRI}) can be improved pointwise (see \cite{BGDKFS} for details). In this section, we define a generalized bi-Laplacian operator $(-\Delta_{\lambda})^{2}$ acting on $A\in C_c(\mathbb{N}_0)$ such that $A_0=A_1=0$, and obtain an extended improved discrete Rellich inequality in one-dimension.\\
Choosing $\delta_{n}=\frac{1}{\lambda_{n}}$ for each $n\in \mathbb{N}$, and replacing $(-\Delta_{\lambda})^{2}$ by $(-\Delta_{\delta})^{2}$, where the operator $(-\Delta_{\delta})^2$ acting on the sequence $A$ is defined as below:
\begin{align*}
&((-\Delta_{\delta})^2 A)_{n} \\&= \left\{
\begin{array}{lll}
    \Big(({\delta_{0}}+{\delta_{1}})^{2}+{\delta^{2}_{1}}\Big)A_{0}-
    \Big({\delta^{2}_{1}}+\delta_{0}{\delta_{1}}+\delta_{1}{\delta_{2}}\Big)A_{1}+{\delta_{1}\delta_{2}}A_2 & \quad \mbox{if~~} n=0,\vspace{0.5cm}\\
    -\Big({2\delta^{2}_{1}}+{\delta_{0}\delta_{1}}+{\delta_{1}\delta_{2}}\Big)A_{0}+\Big(({\delta_{1}}+{\delta_{2}})^{2}+
    {\delta^{2}_{1}}+{\delta^{2}_{2}}\Big)A_{1}\\
    \hspace{0.5cm}-\Big({2\delta^{2}_{2}}+{\delta_{1}\delta_{2}}+{\delta_{2}\delta_{3}}\Big)A_2+ {\delta_{2}\delta_{3}}A_3 & \quad \mbox{if~~} n=1,\vspace{0.5cm}\\
    \Big(({\delta_{n}}+{\delta_{n+1}})^{2}+{\delta^{2}_{n}}+{\delta^{2}_{n+1}}\Big)A_{n}-
    \Big(2{\delta^{2}_{n}}+{\delta_{n}\delta_{n-1}}+{\delta_{n}\delta_{n+1}}\Big)A_{n-1}\\
    -\Big(2{\delta^{2}_{n+1}}+{\delta_{n}\delta_{n+1}}+{\delta_{n+1}\delta_{n+2}}\Big)A_{n+1}+{\delta_{n}\delta_{n-1}}A_{n-2}+
    {\delta_{n+1}\delta_{n+2}}A_{n+2} & \quad \mbox{if~~} n\geq 2.
\end{array}\right.
\end{align*}
Using the bi-Laplacian operator $(-\Delta_{\delta})^{2}$ and a sequence $\mu=\{\mu_{n}\}$ of strictly positive  of real numbers, we define an extended Rellich weight sequence $\sigma^{(2)}_{n}$ for $n\geq 2$ as below:
\begin{center}
$\sigma_n^{(2)}=\frac{((-\Delta_{\delta})^{2}\mu)_n}{\mu_n}$,
\end{center}
which means
\begin{align*}
\sigma^{(2)}_{n}&=\frac{((-\Delta_{\delta})^{2}\mu)_n}{\mu_n}\\
&=\Big((\delta_{n}+\delta_{n+1})^{2}+\delta^{2}_{n}+\delta^{2}_{n+1}\Big)
-\frac{\mu_{n-1}}{\mu_{n}}\Big(2\delta^{2}_{n}+\delta_{n}\delta_{n-1}+\delta_{n}\delta_{n+1}\Big)\\
&-\frac{\mu_{n+1}}{\mu_{n}}\Big(2\delta^{2}_{n+1}+\delta_{n}\delta_{n+1}+\delta_{n+1}\delta_{n+2}\Big)
+\frac{\mu_{n-2}}{\mu_{n}}\delta_{n}\delta_{n-1}+\frac{\mu_{n+2}}{\mu_{n}}\delta_{n+1}\delta_{n+2}.\\
\end{align*}
We are now ready to establish the extended version of the discrete Rellich inequality in one dimension. Denote $\Sigma=diag\{\sigma_1^{(2)}, \sigma_2^{(2)}, \ldots\}$. Then we have the following theorem, statement of which is given as follows:
\begin{thm}{\label{THMI}}
Suppose that $A\in C_c(\mathbb{N}_0)$ such that $A_0=A_1=0$. Then there exists a remainder matrix $R^{(2)}_{\delta}$ for which the following identity holds:
\begin{align}{\label{RFI}}
&A(-\Delta_{\delta})^{2}\bar A^{T}=A\Sigma \bar A^{T}+ A({\overline{R}^{(2)}_{\delta}}^TR^{(2)}_{\delta})\bar A^{T},
\end{align}which means the following inequality is true:
\begin{align}{\label{RFINQ2}}
\displaystyle\sum_{n=2}^{\infty}((-\Delta_{\delta})^{2}A)_{n}\bar{A}_n=\displaystyle\sum_{n=1}^{\infty}|(-\Delta_{\delta}A)_{n}|^{2}\geq\displaystyle
\sum_{n=1}^{\infty}\sigma^{(2)}_{n}|A_{n}|^{2}
\end{align}
\end{thm}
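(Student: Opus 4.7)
The strategy is a higher-order analogue of the factorization used in Theorem \ref{THMIDHI}. Set $M:=(-\Delta_{\delta})^{2}-\Sigma$. By the very definition $\sigma^{(2)}_{n}=((-\Delta_{\delta})^{2}\mu)_{n}/\mu_{n}$, the positive sequence $\mu$ satisfies $M\mu=0$ entry-wise, so $\mu$ is a formal ground state in the kernel of the pentadiagonal symmetric operator $M$. My plan is to exhibit an explicit Cholesky-type upper factor $R^{(2)}_{\delta}$ realising $M={\overline{R}^{(2)}_{\delta}}^{T}R^{(2)}_{\delta}$; from this, the identity (\ref{RFI}) is immediate, and (\ref{RFINQ2}) follows by the non-negativity of $\sum_{n}|(R^{(2)}_{\delta}A)_{n}|^{2}$.

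Because $(-\Delta_{\delta})^{2}$ is pentadiagonal (each row touches $A_{n-2},\ldots,A_{n+2}$), the factor must have three nonzero entries per row. I would therefore start from the ansatz
\[ (R^{(2)}_{\delta}A)_{n}=\alpha_{n}A_{n}+\beta_{n}A_{n+1}+\gamma_{n}A_{n+2}, \]
and match the diagonal, first-off-diagonal, and second-off-diagonal entries of $\sum_{n}|(R^{(2)}_{\delta}A)_{n}|^{2}$ against those of $M$, producing three families of coupled relations. The key simplification is that $M\mu=0$ forces $R^{(2)}_{\delta}\mu=0$, i.e.\ $\alpha_{n}\mu_{n}+\beta_{n}\mu_{n+1}+\gamma_{n}\mu_{n+2}=0$, and this relation is precisely what makes the otherwise over-constrained matching consistent: the second off-diagonal equation yields $\alpha_{n}\gamma_{n}$ in closed form as an outer-band product of $\delta_{k}$'s, the kernel relation then supplies $\beta_{n}$, and the diagonal equation reduces to a scalar recurrence for $\alpha_{n}^{2}$ that is propagated forward from $n=1$ using the boundary conditions $A_{0}=A_{1}=0$.

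The main obstacle will be purely algebraic: to check that the recurrence for $\alpha_{n}^{2}$ stays non-negative at every step, so that $R^{(2)}_{\delta}$ has real entries and the factorization is genuine rather than merely formal. Positivity of $\mu$ and of the $\delta_{k}$'s should do this job, because the outer-band coefficients $\delta_{n}\delta_{n-1}$ and $\delta_{n+1}\delta_{n+2}$ appearing in $(-\Delta_{\delta})^{2}$ are strictly positive, and the ground-state ratios $\mu_{n\pm 1}/\mu_{n}$ that enter $\beta_{n}$ and $\gamma_{n}$ remain finite and positive. Once this positivity is verified and the closed-form expressions for $\alpha_{n},\beta_{n},\gamma_{n}$ are written down, a direct entry-wise comparison of $M$ with ${\overline{R}^{(2)}_{\delta}}^{T}R^{(2)}_{\delta}$ establishes (\ref{RFI}), from which (\ref{RFINQ2}) is then immediate.
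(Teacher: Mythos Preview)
Your strategy is essentially the paper's: a three-term upper-triangular ansatz for $R^{(2)}_{\delta}$, the kernel condition $R^{(2)}_{\delta}\mu=0$, and a scalar recurrence obtained by comparing coefficients. One small logical point: $M\mu=0$ does not \emph{force} $R^{(2)}_{\delta}\mu=0$ a priori---you would need the factorization $M={\overline{R}^{(2)}_{\delta}}^{T}R^{(2)}_{\delta}$ already in hand (and $\mu\in\ell^{2}$) to argue that way. The paper, correctly, imposes $R^{(2)}_{\delta}\mu=0$ as an additional \emph{ansatz} used to close the system, not as a consequence. This does not affect the construction, only the narrative.

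The substantive gap is your last paragraph. You write that positivity of $\mu$ and of the $\delta_{k}$ ``should do this job'' of keeping the recurrence for $\alpha_{n}^{2}$ non-negative. It does not, in general, and this is precisely the delicate point. The paper derives the recurrence (its $\gamma_{n}^{2}$, your $\alpha_{n}^{2}$) but does \emph{not} claim positivity for arbitrary positive $\delta$ and $\mu$; instead it isolates positivity as a separate issue, remarks that the exact expression ``is very difficult to obtain'', and then proves the two-sided bound $p_{n}p_{n+1}<\gamma_{n}^{2}<p_{n}p_{n+1}p_{n+2}$ only for the specific choice $\delta_{n}=\frac{n+2}{n+1}$, $\mu_{n}=n^{3/2}$ via an induction supported by two nontrivial lemmas (rational-function estimates and AM--GM arguments). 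So your plan is right, but the expectation that positivity comes for free from $\mu_{k},\delta_{k}>0$ is unwarranted: that step requires real work and is only carried out for particular sequences.
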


\begin{proof}
To prove inequality (\ref{RFINQ2}), we assume that the above identity (\ref{RFI}) holds true for unknown $R^{(2)}_{\delta}$:
\begin{align}{\label{RFI2}}
\displaystyle\sum_{n=1}^{\infty}|(-\Delta_{\delta}A)_{n}|^{2}=\displaystyle\sum_{n=1}^{\infty}
\sigma^{(2)}_{n}|A_{n}|^{2}+\displaystyle\sum_{n=1}^{\infty}|(R^{(2)}_{\delta}A)_{n}|^{2},
\end{align}where it is supposed that $R^{(2)}_{\delta}$ acting on the sequence $\{A_{n}\}$ has the following form:
\begin{align}{\label{RFR}}
&(R^{(2)}_{\delta}A)_{n}=
\gamma_{n}\sqrt{\delta_{n+1}\delta_{n+2}}A_{n}-\beta_{n}\sqrt{\delta_{n+1}}A_{n+1}+\frac{\sqrt{\delta_{n+1}\delta_{n+2}}}{\gamma_{n}}A_{n+2}, ~n\in \mathbb{N},
\end{align}
where $\beta_{n}, \gamma_{n}\in\mathbb{R}$ and $\gamma_n\neq 0$ for $n\in\mathbb{N}$. Our aim is to determine the coefficients $\beta_{n}, \gamma_{n}$ such that the identity (\ref{RFI2}) is being satisfied. To reach the goal, we compute each terms of the identity (\ref{RFI2}). Throughout our investigation in this section, it is assumed that $A_0=A_1=0$. Let us start evaluation of the following infinite sum.
\begin{align*}
&\displaystyle\sum_{n=1}^{\infty}|((-\Delta_{\delta})A)_{n}|^{2}\\
&=\displaystyle\sum_{n=1}^{\infty}|(\delta_{n}+\delta_{n+1})A_{n}
-A_{n-1}\delta_{n}-A_{n+1}\delta_{n+1}|^{2}\\
&=\displaystyle\sum_{n=2}^{\infty}\Big[
(\delta_{n}+\delta_{n+1})^{2}+\delta^{2}_{n}+\delta^{2}_{n+1}\Big]|A_{n}|^{2}
-2\mathbb{R}\displaystyle\sum_{n=2}^{\infty}\Big(\delta^{2}_{n+1}+\delta_{n+1}\delta_{n+2}\Big)\bar A_{n+1}A_{n}\\
&-2\mathbb{R}\displaystyle\sum_{n=2}^{\infty}\Big(\delta_{n}\delta_{n+1}+\delta^{2}_{n+1}\Big)\bar A_{n+1}A_{n}
+2\mathbb{R}\displaystyle\sum_{n=2}^{\infty}\delta_{n+1}\delta_{n+2}A_{n}\bar A_{n+2}.
\end{align*}
Now the following computation gives
\begin{align*}
&\displaystyle\sum_{n=1}^{\infty}|(R^{(2)}_{\delta}A)_{n}|^{2}\\
&= \displaystyle\sum_{n=1}^{\infty} |\gamma_{n}\sqrt{\delta_{n+1}\delta_{n+2}}A_{n}-\beta_{n}\sqrt{\delta_{n+1}}A_{n+1}+\frac{\sqrt{\delta_{n+1}\delta_{n+2}}}{\gamma_{n}}A_{n+2}|^{2}\\
&=\displaystyle\sum_{n=1}^{\infty}\Big[\gamma^{2}_{n}\delta_{n+1}\delta_{n+2}|A_{n}|^{2}+
\beta^{2}_{n}\delta_{n+1}|A_{n+1}|^{2}+\frac{\delta_{n+1}\delta_{n+2}}{\gamma^{2}_{n}}|A_{n+2}|^{2}
-2\mathbb{R}\Big(\gamma_{n}\beta_{n}{\sqrt{\delta_{n+2}}\delta_{n+1}}A_{n}\bar A_{n+1}\Big)\\
&-2\mathbb{R}\Big(\frac{\beta_{n}}{\gamma_{n}}\delta_{n+1}\sqrt{\delta_{n+2}} A_{n+1}\bar A_{n+2}\Big)+2\mathbb{R}\Big(\delta_{n+1}\delta_{n+2}A_{n}\bar A_{n+2}\Big)\Big]\\
& = \Big(\gamma^{2}_{2}\delta_{4}\delta_{3}+\beta^{2}_{1}\delta_{2}\Big)|A_{2}|^{2}+\displaystyle\sum_{n=3}^{\infty}
\Big[\delta_{n+1}\delta_{n+2}\gamma^{2}_{n}+
\beta^{2}_{n-1}\delta_{n}+\frac{1}{\gamma^{2}_{n-2}}\delta_{n-1}\delta_{n}\Big]|A_{n}|^{2}\\
&-2\mathbb{R}\displaystyle\sum_{n=2}^{\infty}
\Big(\gamma_{n}\beta_{n}\delta_{n+1}\sqrt{\delta_{n+2}}
+\frac{\beta_{n-1}}{\gamma_{n-1}}\delta_{n}\sqrt{\delta_{n+1}}\Big)A_{n}\bar A_{n+1}
+2\mathbb{R}\displaystyle\sum_{n=2}^{\infty}
A_{n}\bar A_{n+2}\delta_{n+1}\delta_{n+2}.
\end{align*}
Plugging these values in (\ref{RFI2}), and comparing the coefficients in both sides we get a set of equations as prescribed below:
\begin{align}{\label{delta3E}}
&(\delta_{2}+\delta_{3})^{2}+\delta^{2}_{2}+\delta^{2}_{3}=\sigma^{(2)}_{2}+
\gamma^{2}_{2}\delta_{3}\delta_{4}+\beta^{2}_{1}\delta_{2} \nonumber\\
&(\delta_{n+1}+\delta_{n})^{2}+\delta^{2}_{n}+\delta^{2}_{n+1}=\sigma^{(2)}_{n}+
\gamma^{2}_{n}\delta_{n+1}\delta_{n+2}+\beta^{2}_{n-1}\delta_{n}+\frac{1}{\gamma^{2}_{n-2}}\delta_{n-1}\delta_{n}~~\forall n\geq3 \nonumber\\
&2\delta^{2}_{n+1}+\delta_{n}\delta_{n+1}+\delta_{n+1}\delta_{n+2}
=\gamma_{n}\beta_{n}\delta_{n+1}\sqrt{\delta_{n+2}}+\frac{\beta_{n-1}}{\gamma_{n-1}}\delta_{n}\sqrt{\delta_{n+1}} ~~\forall n\geq2.
\end{align}
As we have seen that in case of discrete Hardy inequality (\ref{HFR}), the remainder term $R^{(1)}_{\lambda}$ vanishes on $\{\mu_n\}$ that is $(R^{(1)}_{\lambda}\mu)_{n}=0$. Therefore, we here additionally assume that the remainder term $R^{(2)}_{\delta}$ will also vanish when acting on $\{\mu_n\}$ that is $(R^{(2)}_{\delta}\mu)_{n}=0$ in case of discrete Rellich inequality, which means that
\begin{align}{\label{RFRSE}}
&\gamma_{n}\mu_{n}\sqrt{\delta_{n+1}\delta_{n+2}}-\beta_{n}\mu_{n+1}\sqrt{\delta_{n+1}}+
\frac{\mu_{n+2}}{\gamma_{n}}\sqrt{\delta_{n+1}\delta_{n+2}}=0,~~n\in \mathbb{N}.
\end{align}
Computing  $\beta_{n}$ from (\ref{RFRSE}) and using it in (\ref{delta3E}), we get a recurrence relation of $\gamma^{2}_{n}$ as given below\\
\begin{align}{\label{RecurrenceR}}
\gamma^{2}_{n} &= \frac{1}{\delta_{n+1}\delta_{n+2}}\frac{\mu_{n+1}}{\mu_{n}}\Big[2\delta^{2}_{n+1}+
\delta_{n}\delta_{n+1}+\delta_{n+1}\delta_{n+2} \nonumber\\
&-\frac{\mu_{n+2}}{\mu_{n+1}}\delta_{n+1}\delta_{n+2} -\frac{\mu_{n-1}}{\mu_{n}}\delta_{n}\delta_{n+1}-\frac{\mu_{n+1}}{\mu_{n}\gamma^{2}_{n-1}}\delta_{n}\delta_{n+1}\Big], ~~~n\geq 2.
\end{align}
We now choose an initial assumption for determining $\gamma^{2}_{n}$, $n\geq 2$ as
\begin{align}{\label{Initialcon}}
&\gamma^{2}_{1}=\frac{\mu_{2}}{\mu_{1}}\Big[2\frac{\delta_{2}}{\delta_{3}}+\frac{\delta_{1}}{\delta_{3}}+1\Big]-\frac{\mu_{3}}{\mu_{1}}
\end{align}
Using the value of $\gamma^{2}_{1}$, and recurrence relation (\ref{RecurrenceR}), one obtains the values of $\{\gamma^{2}_{n}\}$ for $n\geq2$. Once we get the positive values of $\{\gamma^{2}_{n}\}$, then we also get the value of $\{\beta_{n}\}$ from equation (\ref{RFRSE}). Hence we obtain a valid expression for the remainder term $(R^{(2)}_{\delta}A)_{n}$ from (\ref{RFR}).
\end{proof}

It is clear that the exact expression for $\{\gamma^{2}_{n}\}$ is very difficult to obtain but they are exists from our previous investigation. Our next result shows that such $\{\gamma^{2}_{n}\}$ will indeed exist for a suitable $\delta_{n}$ and $\mu_{n}$. It is pertinent to mention here that Gerhat et al. \cite{BGDKFS} also established such kind of existence for $\{\gamma^{2}_{n}\}$, but their result is a particular case of our result for $\delta_n=1$ $(\mbox{or}~~\lambda_n=1)$, $n\in \mathbb{N}$. Here we prove that $\{\gamma^{2}_{n}\}$ exists for a special choices of $\delta_n$ and lies within the same bounds as provided in \cite{BGDKFS}. Our result reads as follows.
\begin{Lemma}{\label{EXISTLEM}}
Let $n\in\mathbb{N}$, $p_{n}=\frac{\mu_{n+1}}{\mu_{n}}$, $\mu_n=n^{3/2}$ and $\{\gamma^{2}_{n}\}$, $\{\gamma^{2}_{1}\}$ are given in equations (\ref{RecurrenceR}) and (\ref{Initialcon}), respectively with $\delta_{n}=\frac{n+2}{n+1}$. Then
\begin{align*}
&p_{n}p_{n+1}<\gamma^{2}_{n}<p_{n}p_{n+1}p_{n+2}.
\end{align*}
\end{Lemma}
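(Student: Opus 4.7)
The plan is to proceed by induction on $n$, leveraging the fact that the recurrence (\ref{RecurrenceR}) expresses $\gamma^{2}_{n}$ as a strictly increasing function of $\gamma^{2}_{n-1}$. Rewriting it in the compact form $\gamma^{2}_{n}=A_{n}-B_{n}/\gamma^{2}_{n-1}$ with
\begin{equation*}
A_{n}=\frac{p_{n}}{\delta_{n+1}\delta_{n+2}}\Big[2\delta_{n+1}^{2}+\delta_{n}\delta_{n+1}+\delta_{n+1}\delta_{n+2}-p_{n+1}\delta_{n+1}\delta_{n+2}-\frac{\delta_{n}\delta_{n+1}}{p_{n-1}}\Big], \qquad B_{n}=\frac{p_{n}^{2}\delta_{n}}{\delta_{n+2}},
\end{equation*}
makes the monotonicity in $\gamma^{2}_{n-1}$ manifest; and for the specific choices $\delta_{n}=(n+2)/(n+1)$ and $p_{n}=((n+1)/n)^{3/2}$ a direct check shows $A_{n},B_{n}>0$.

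For the base case $n=1$, I would substitute $\delta_{1}=3/2,\ \delta_{2}=4/3,\ \delta_{3}=5/4$ and $p_{1}=2\sqrt{2},\ p_{2}=(3/2)^{3/2},\ p_{3}=(4/3)^{3/2}$ into (\ref{Initialcon}) to obtain $\gamma^{2}_{1}=\tfrac{26\sqrt{2}}{3}-3\sqrt{3}$, and verify numerically that $p_{1}p_{2}=3\sqrt{3}<\gamma^{2}_{1}<8=p_{1}p_{2}p_{3}$.

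For the inductive step, since $x\mapsto A_{n}-B_{n}/x$ is strictly increasing on $(0,\infty)$, the inductive hypothesis $p_{n-1}p_{n}<\gamma^{2}_{n-1}<p_{n-1}p_{n}p_{n+1}$ reduces the desired conclusion $p_{n}p_{n+1}<\gamma^{2}_{n}<p_{n}p_{n+1}p_{n+2}$ to the two scalar inequalities
\begin{align*}
B_{n}\,\leq\,p_{n-1}p_{n}\bigl(A_{n}-p_{n}p_{n+1}\bigr)\qquad\text{and}\qquad B_{n}\,\geq\,p_{n-1}p_{n}p_{n+1}\bigl(A_{n}-p_{n}p_{n+1}p_{n+2}\bigr).
\end{align*}
After substituting the closed forms of $\delta_{n}$ and $p_{n}$, each becomes an explicit algebraic comparison depending only on $n$.

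The main obstacle will be verifying these two scalar inequalities uniformly in $n\geq 2$: although the asymptotics $p_{n}=1+\tfrac{3}{2n}+O(n^{-2})$ and $\delta_{n}=1+\tfrac{1}{n+1}$ make both claims transparent for large $n$, the half-integer exponents in $p_{n}$ call for either a Taylor expansion of $(1\pm 1/n)^{3/2}$ with sign-controlled remainder (in the spirit of the estimates used in Lemmas \ref{LEMGN}--\ref{LEMVNTILDE}), or the rationalising substitution $q_{n}=(1+1/n)^{1/2}$ that reduces the bounds to polynomial inequalities. A pragmatic alternative is to emulate the strategy of Gerhat, Krej\v{c}i\v{r}\'{i}k and \v{S}tampach \cite{BGDKFS}, who settled the analogous bounds in the unperturbed case $\delta_{n}\equiv 1$, and to track how the small perturbation $\delta_{n}=1+1/(n+1)$ modifies their estimates.
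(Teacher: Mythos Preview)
Your proposal is correct and follows essentially the same route as the paper: both argue by induction, exploit that the recurrence $\gamma_{n}^{2}=A_{n}-B_{n}/\gamma_{n-1}^{2}$ is monotone in $\gamma_{n-1}^{2}$, and thereby reduce the double bound to two explicit scalar inequalities in $n$. The paper carries out the step you flag as the ``main obstacle'' via Lemmas~\ref{LEMTN} and~\ref{LEMUN}, using A.M.--G.M.--H.M. estimates on the square-root factors to reduce each bound to a single-variable polynomial inequality, exactly the strategy you suggest.
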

To establish this lemma, we need some results, which are given in the form of several Lemmas. Before proceeding further, we denote the following:
\begin{align}{\label{EXPTN}}
T(n)&=\Big[1+\frac{2(n+3)^{2}}{(n+2)(n+4)}+\frac{(n+3)(n+2)}{(n+4)(n+1)}
    -\frac{2(n+3)(n+2)}{(n+4)(n+1)}(\frac{n-1}{n})^{\frac{3}{2}}-2(\frac{n+2}{n+1})^{\frac{3}{2}}\Big].
\end{align}

\begin{Lemma}{\label{LEMTN}}
Suppose that $n\geq 2$, $n\in\mathbb{N}$. Then $T(n)>0$, where $T(n)$ is given in the equation (\ref{EXPTN}).
\end{Lemma}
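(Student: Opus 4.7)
The strategy is to replace each of the two fractional powers in $T(n)$ by an elementary polynomial upper bound and thereby reduce the claim to positivity of an explicit rational function of $n$.

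Concavity of $\sqrt{\cdot}$ gives $(1+y)^{1/2}\le 1+y/2$ for every $y\ge-1$, so multiplying by $1+y$ one gets the key polynomial upper bound $(1+y)^{3/2}\le 1+\tfrac{3y}{2}+\tfrac{y^2}{2}$. Specializing to $y=-1/n$ and to $y=1/(n+1)$ produces explicit quadratic-in-$1/n$ upper bounds for both $(1-1/n)^{3/2}$ and $(1+1/(n+1))^{3/2}$. Since both powers enter $T(n)$ with negative coefficients, substituting these bounds gives a clean rational lower bound $T(n)\ge T_{\mathrm{low}}(n)$.

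To put $T_{\mathrm{low}}(n)$ into a tractable form I would first use the elementary decompositions $\tfrac{2(n+3)^2}{(n+2)(n+4)}=2+\tfrac{2}{(n+2)(n+4)}$ and $\tfrac{(n+3)(n+2)}{(n+1)(n+4)}=1+\tfrac{2}{(n+1)(n+4)}$, which cause the constants $1+2+1-2-2=0$ to cancel. What survives is a sum of terms each of order $1/n$ or smaller; collecting them over the common denominator $n^{2}(n+1)^{2}(n+2)(n+4)$ reduces the statement $T_{\mathrm{low}}(n)>0$ to positivity of an integer polynomial numerator $P(n)$.

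The main (and essentially the only) obstacle is verifying $P(n)>0$ for $n\ge 2$, since this is where the proof can fail if the concavity bound were too loose. I would handle it by expanding $P(n)$ directly. A Taylor calculation shows $T(n)=\tfrac{3}{2n^{2}}+O(1/n^{3})$ while the concavity step overestimates each fractional power by only $O(1/n^{2})$, so $T_{\mathrm{low}}(n)$ is expected to remain positive of order $1/n^{2}$; this predicts $P$ is a quartic with positive leading coefficient, all other non-constant coefficients non-negative, and the constant term small enough to be absorbed by the $n^{4}$ contribution as soon as $n\ge 2$. A quick sanity check at $n=2$, which gives $T_{\mathrm{low}}(2)=1/4$, confirms that the concavity bound has not been so wasteful as to ruin the argument, and the polynomial check completes the proof.
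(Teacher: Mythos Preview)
Your approach is correct and genuinely different from the paper's. The paper clears denominators by multiplying through by $(n+4)(n+2)(n^{2}+n)^{3/2}$, obtaining a numerator that is a combination of the three radicals $\sqrt{n^{2}+n}$, $\sqrt{n^{2}-1}$, $\sqrt{n^{2}+2n}$, and then applies AM--GM--HM estimates on those radicals to show positivity. Your route instead kills the irrationalities at the source: the tangent-line bound $(1+y)^{3/2}\le 1+\tfrac{3y}{2}+\tfrac{y^{2}}{2}$ applied to $y=-1/n$ and $y=1/(n+1)$ replaces both fractional powers by quadratics, and the problem reduces to a single polynomial check. Carrying this out gives numerator $P(n)=n^{4}+11n^{3}+27n^{2}+8n-12$ over the denominator $n^{2}(n+1)^{2}(n+2)(n+4)$; since all non-constant coefficients of $P$ are positive and $P(1)=35>0$, positivity for $n\ge 2$ is immediate (your sanity check $T_{\mathrm{low}}(2)=216/864=1/4$ is consistent). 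Your argument is shorter and avoids juggling several square-root terms; the paper's approach, on the other hand, keeps the exact structure of the radicals visible, which is closer in spirit to how the companion Lemma on $U(n)$ is handled.
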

\begin{proof}
A direct simplification of $T(n)$ gives the following, i.e,
\begin{align*}
T(n)=&\frac{1}{(n+4)(n+2)(n^{2}+n)^{\frac{3}{2}}}\Big[\sqrt{n^{2}+n}\Big(4n^{4}+28n^{3}+60n^{2}+38n\Big)\\
& -2\sqrt{n^{2}-1}\Big(n^{4}+6n^{3}+9n^{2}-4n-12\Big)
-2n\sqrt{n^{2}+2n}\Big(n^{3}+8n^{2}+20n+16\Big)\Big].
\end{align*}Applying A.M.-G.M.-H.M. inequality one gets
\begin{align*}
T(n)&>\frac{1}{(n+4)(n+2)(2n+1)(n^{2}+n)^{3/2}}\Big[2n(n+1)\Big(4n^{4}+28n^{3}+60n^{2}+38n\Big)\\
&-2n(2n+1)\Big(n^{4}+6n^{3}+9n^{2}-4n-12\Big)-2n\sqrt{(n^{2}+2n)}(2n+1)\Big(n^{3}+8n^{2}+20n+16\Big)\Big]\\
=&\frac{2n}{(n+4)(n+2)(2n+1)(n^{2}+n)^{3/2}}\Big[\Big(2n^{5}+19n^{4}+64n^{3}+97n^{2}+66n+12\Big)\\
&-\sqrt{(n^{2}+2n)}\Big(2n^{4}+17n^{3}+48n^{2}+52n+16\Big)\Big]\\
=&\Big\{\frac{2n\Big(18n^{7}+230n^{6}+1164n^{5}+3001n^{4}+4196n^{3}+3100n^{2}+1072n+144\Big)}{(n+4)(n+2)(2n+1)(n^{2}+n)^{3/2}}\Big\}D_n>0,
\end{align*} where
\begin{align*}
1/D_n&=\Big(2n^{5}+19n^{4}+64n^{3}+97n^{2}+66n+12\Big)\\
&\hspace{0.5cm}+\sqrt{(n^{2}+2n)}\Big(2n^{4}+17n^{3}+48n^{2}+52n+16\Big).\\
\end{align*}
Hence $T(n)>0$, $\forall n\geq2$.
\end{proof}

To establish the next result, we denote $U(n)$ for each $n\in \mathbb{N}$ as below
\begin{align}{\label{EQNUN}}
U({n})&=\Big\{1+2\frac{(n+3)^{2}}{(n+2)(n+4)}+\frac{(n+3)(n+2)}{(n+4)(n+1)}-\frac{(n+3)(n+2)}{(n+4)(n+1)}\Big(\frac{n-1}{n}\Big)^{\frac{3}{2}}
\Big(\frac{n+1}{n+2}\Big)^{\frac{3}{2}}\nonumber\\
    &\hspace{0.5cm}-\frac{(n+3)(n+2)}{(n+4)(n+1)}\Big(\frac{n-1}{n}\Big)^{\frac{3}{2}}
    -\Big(\frac{n+2}{n+1}\Big)^{\frac{3}{2}}\Big\}.
\end{align}

\begin{Lemma}{\label{LEMUN}}
Let $n\geq 2$ be a natural number. Then $U(n)<p_{n+1}p_{n+2}$ holds for all $n\geq 2$.
\end{Lemma}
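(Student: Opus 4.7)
The plan is to prove $U(n) < p_{n+1}p_{n+2}$ by direct algebraic estimation, closely following the strategy of the proof of Lemma \ref{LEMTN}. With $\mu_n = n^{3/2}$, the product telescopes to $p_{n+1}p_{n+2} = \mu_{n+3}/\mu_{n+1} = \left(\frac{n+3}{n+1}\right)^{3/2}$. Setting $V(n) := p_{n+1}p_{n+2} - U(n)$, the task reduces to showing that $V(n) > 0$ for every $n \geq 2$.

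First I would collect the terms of $V(n)$ by type. All of the half-integer-power contributions to $V(n)$ are positive: there are $\left(\frac{n+3}{n+1}\right)^{3/2}$ from $p_{n+1}p_{n+2}$ and $\left(\frac{n+2}{n+1}\right)^{3/2}$ from the last term of $-U(n)$, together with the quantity $B\left[\left(\frac{n-1}{n}\right)^{3/2} + \left(\frac{(n-1)(n+1)}{n(n+2)}\right)^{3/2}\right]$, where $B = \frac{(n+3)(n+2)}{(n+1)(n+4)}$. The purely rational part of $V(n)$ is $-1 - \frac{2(n+3)^2}{(n+2)(n+4)} - \frac{(n+3)(n+2)}{(n+4)(n+1)}$. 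Thus one must show that the sum of four positive radicals dominates this manageable rational quantity.

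Next, I would replace each factor $\left(\frac{p}{q}\right)^{3/2}$ by the rational lower bound obtained from writing it as $\frac{p}{q^2}\sqrt{pq}$ and applying the H.M.$\leq$G.M. inequality in the form $\sqrt{m(m+1)} \geq \frac{2m(m+1)}{2m+1}$ (and the analogous bound $\sqrt{(n-1)(n+1)} \geq \frac{n^2-1}{n}$). For the compound factor $\left(\frac{(n-1)(n+1)}{n(n+2)}\right)^{3/2}$, the G.M.$\leq$A.M. bound $\sqrt{n(n+2)} \leq n+1$ is used on the denominator together with the H.M. bound on the numerator, producing $\left(\frac{(n-1)(n+1)}{n(n+2)}\right)^{3/2} \geq \frac{(n-1)^2(n+1)}{n^2(n+2)}$. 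Substituting the four resulting bounds into $V(n)$ gives a rational function $\widetilde V(n)$ with $V(n) \geq \widetilde V(n)$, and the final step is to verify that $\widetilde V(n) > 0$ by clearing denominators and exhibiting the numerator as a polynomial with non-negative integer coefficients for $n \geq 2$, in the style of the closing display of Lemma \ref{LEMTN}.

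The main obstacle will be bookkeeping: the four distinct radicals, each requiring its own A.M.-G.M.-H.M. substitution in a carefully chosen direction, combine after common-denominator rewriting into a polynomial of modest degree but with many monomials, and the direction for the compound double-ratio factor is what distinguishes this case from Lemma \ref{LEMTN}. It is plausible that a coarse polynomial lower bound will fail for the very smallest values of $n$, in which case those cases must be checked numerically before the general estimate applies for $n \geq n_0$; the aim is to choose the substitutions tightly enough that $n_0 = 2$ works uniformly.
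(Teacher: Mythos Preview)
Your plan is in the same spirit as the paper's proof---both set up the difference $S(n)=U(n)-p_{n+1}p_{n+2}$ (your $-V(n)$) and use A.M.--G.M.--H.M. substitutions to control the half-integer powers---but the execution diverges at the key step. The paper does \emph{not} attempt to replace every radical by a rational bound. Instead it clears denominators to obtain a numerator $f(n)$ carrying five distinct square roots $\sqrt{n(n+1)(n+2)}$, $\sqrt{n(n+2)(n+3)}$, $\sqrt{(n-1)(n+1)(n+2)}$, $\sqrt{n}$, $\sqrt{n-1}$, applies three A.M.--G.M.--H.M. bounds to collapse the first three into multiples of $\sqrt{n}$ or $\sqrt{n-1}$, and then rationalizes the remaining two-radical expression $A\sqrt{n}-B\sqrt{n-1}$ by multiplying with its conjugate, landing on an explicit degree-$13$ polynomial with the correct sign.

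The reason this two-radical reduction matters is precision. A short asymptotic check shows $V(n)\sim \frac{15}{4n^2}$, while each of your four H.M./A.M. substitutions individually discards a term of exact order $1/n^2$; so whether your fully-rationalized $\widetilde V(n)$ stays positive is a question of constants, and the risk is at \emph{large} $n$, not at the small $n$ you flagged. The paper sidesteps this by keeping $\sqrt{n}$ and $\sqrt{n-1}$ alive until the very end and only then rationalizing exactly via the conjugate, which loses nothing. Your strategy may still go through if the constants cooperate, but you would need to track the $1/n^2$ coefficients in all four substitutions and verify they sum to less than $15/4$; absent that, the paper's partial-reduction-plus-conjugate route is the safer one.
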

\begin{proof}
For this, we denote $S(n)$ by the following difference
\begin{align*}
S(n)&=U(n)-p_{n+1}p_{n+2}\\
&=\Big[2\frac{(n+3)^{2}}{(n+2)(n+4)}+\frac{(n+3)(n+2)}{(n+4)(n+1)}
 +1-\frac{(n+3)(n+2)}{(n+4)(n+1)}(\frac{n-1}{n})^{\frac{3}{2}}(\frac{n+1}{n+2})^{\frac{3}{2}}\\
&\hspace{0.5cm}-\frac{(n+3)(n+2)}{(n+4)(n+1)}(\frac{n-1}{n})^{\frac{3}{2}}
-(\frac{n+2}{n+1})^{\frac{3}{2}} -(\frac{n+3}{n+1})^{\frac{3}{2}}\Big]
\end{align*}
Simplifying $S(n)$ as a fraction, we obtain a numerator $f(n)$ (say) as below:
\begin{align*}
f(n)&=\Big[2(n+3)^{2}n(n+1)\sqrt{n(n+1)(n+2)}+n(n+3)(n+2)^{2}\sqrt{n(n+1)(n+2)}\\
&\hspace{0.5cm}+n(n+1)(n+2)(n+4)\sqrt{n(n+1)(n+2)}-n(n+2)^{3}(n+4)\sqrt{n}\\
&\hspace{0.5cm}-(n+3)(n+2)(n^{2}-1)(n+1)\sqrt{n-1}-n(n+2)(n+3)(n+4)\sqrt{n(n+2)(n+3)}\\
&\hspace{0.5cm}-(n+3)(n+2)^{2}(n-1)\sqrt{(n+1)(n+2)(n-1)}\Big],
\end{align*}which further becomes
\begin{align*}
f(n)&=\sqrt{n(n+1)(n+2)}(4n^{4}+28n^{3}+60n^{2}+38n)\\
&\hspace{0.5cm} -\sqrt{n(n+2)(n+3)}(n^{4}+9n^{3}+26n^{2}+24n)\\
&\hspace{0.5cm}-\sqrt{(n-1)(n+1)(n+2)}(n^{4}+6n^{3}+9n^{2}-4n-12)\\
&\hspace{0.5cm}-\sqrt{n}(n^{5}+10n^{4}+36n^{3}+56n^{2}+32n)-\sqrt{n-1}(n^{5}+6n^{4}+10n^{3}-11n-6).
\end{align*}
We need to show that $f(n)<0$, which gives $S(n)<0$, and hence we get the desired inequality.
Now by applying A.M.-G.M.-H.M. inequality $\forall n\geq2$, we have
\begin{align*}
&(i)\hspace{0.5cm}\sqrt{(n+1)(n+2)}<\frac{2n+3}{2}\\
&(ii)\hspace{0.5cm}\frac{2(n+2)(n+3)}{2n+5}<\sqrt{(n+2)(n+3)}\\
&(iii)\hspace{0.5cm}\frac{2(n+1)(n+2)}{2n+3}<\sqrt{(n+1)(n+2)}.
\end{align*}Using these inequalities, for $n\geq 2$ we get
\begin{align*}
(2n+3)(2n+5)f(n)&<(2n+3)\sqrt{n}\Big(4n^{6}+35n^{5}+98n^{4}+58n^{3}-142n^{2}-163n\Big)\\
&\hspace{0.5cm}-(2n+5)\sqrt{n-1}\Big(4n^{6}+33n^{5}+96n^{4}+100n^{3}-34n^{2}-133n-66\Big).
\end{align*}
Since the sum of the two terms of R.H.S. of the above inequality is positive, so multiplying the same in both sides of above, we get (for $n\geq 2$)
\begin{align*}
&(2n+3)(2n+5)\Big[(2n+3)\sqrt{n}\Big(4n^{6}+35n^{5}+98n^{4}+58n^{3}-142n^{2}-163n\Big)\\
&\hspace{0.5cm}+(2n+5)\sqrt{n-1}\Big(4n^{6}+33n^{5}+96n^{4}+100n^{3}-34n^{2}-133n-66\Big)\Big]f(n)\\
&<-\Big(192n^{13}+3252n^{12}+22956n^{11}+84779n^{10}+159528n^{9}+73848n^{8}-267088n^{7}\\
&\hspace{0.5cm}-428550n^{6}+75848n^{5}+621720n^{4}+273984n^{3}-396949n^{2}-417120n-108900\Big)<0.
\end{align*}
This shows that $f(n)<0$ for $n\geq 2$. Hence for $n\geq 2$ one gets $S(n)<0$, which finishes proof of the lemma.
\end{proof}

\begin{proof} \textbf{of Lemma {\ref{EXISTLEM}}:}\\
It is observed that when $\mu_{n}=n^{\frac{3}{2}}$ then from equation (\ref{RecurrenceR}), we have
\begin{align*}
\gamma^{2}_{n}&=(\frac{n+1}{n})^{\frac{3}{2}}\Big[1+2\frac{(n+3)^{2}}{(n+2)(n+4)}+\frac{(n+3)(n+2)}{(n+4)(n+1)}-\frac{(n+3)(n+2)}{(n+4)(n+1)}(\frac{n-1}{n})^{\frac{3}{2}}\\
&-(\frac{n+2}{n+1})^{\frac{3}{2}}-\frac{(n+2)(n+3)}{(n+4)(n+1)\gamma^{2}_{n-1}}(\frac{n+1}{n})^{\frac{3}{2}}\Big].
\end{align*}
Also $\gamma^{2}_{1}=2\sqrt2\big[2\frac{16}{15}+\frac{12}{10}+1\Big]-3\sqrt3\approx 7.06036>5.19616=p_{1}p_{2}$. Now suppose that $\gamma^{2}_{n-1}>p_{n-1}p_{n}$ holds true for $n\geq 2$. Then by induction from the above expression for $\gamma^{2}_{n}$, we get
\begin{align*}
\gamma^{2}_{n}&>(\frac{n+1}{n})^{\frac{3}{2}}\Big[1+2\frac{(n+3)^{2}}{(n+2)(n+4)}+\frac{(n+3)(n+2)}{(n+4)(n+1)}
-2\frac{(n+3)(n+2)}{(n+4)(n+1)}(\frac{n-1}{n})^{\frac{3}{2}}-(\frac{n+2}{n+1})^{\frac{3}{2}}\Big]\\
&= (\frac{n+1}{n})^{\frac{3}{2}}T(n),
\end{align*}
where $T(n)$ is given in (\ref{EXPTN}). It is now sufficient to show that $T(n)>0$ for all $n\geq 2$. From Lemma \ref{LEMTN}, we see that $T(n)>0$ for all $n\geq 2$. This proves that $\gamma^{2}_{n}>p_{n}p_{n+1}$ $\forall n\in\mathbb{N}$.\\
To establish the reverse inequality, we first note that $\gamma^{2}_{1}\approx 7.06036<8=p_{1}p_{2}p_{3}$. Let us suppose that $\gamma^{2}_{n-1}<p_{n-1}p_{n}p_{n+1}$ holds $\forall n\geq2$, $n\in\mathbb{N}$. Then by induction, we get \\
\begin{align*}
\gamma^{2}_{n}&<(\frac{n+1}{n})^{\frac{3}{2}}\Big[1+2\frac{(n+3)^{2}}{(n+2)(n+4)}+\frac{(n+3)(n+2)}{(n+4)(n+1)}
-\frac{(n+3)(n+2)}{(n+4)(n+1)}(\frac{n-1}{n})^{\frac{3}{2}}(\frac{n+1}{n+2})^{\frac{3}{2}}\\
&-\frac{(n+3)(n+2)}{(n+4)(n+1)}(\frac{n-1}{n})^{\frac{3}{2}}
-(\frac{n+2}{n+1})^{\frac{3}{2}}\Big]\\
&=p_{n}U(n),
\end{align*}
where $U(n)$ is given in equation (\ref{EQNUN}). It is now enough to prove that $U(n)<p_{n+1}p_{n+2}$ for $n\geq 2$, and which follows from Lemma \ref{LEMUN}. Therefore $\gamma^{2}_{n}<p_{n}p_{n+1}p_{n+2}$, $\forall n\in\mathbb{N}$. This completes the proof of the lemma.
\end{proof}
\begin{Remark}
There exists infinitely many sequences for which $\gamma^{2}_{n}$ exists and lies within the same bounds, that is $p_{n}p_{n+1}<\gamma^{2}_{n}<p_{n}p_{n+1}p_{n+2}$, $\forall n\in\mathbb{N}$.
\end{Remark}

\section{Improvement of Knopp inequality via Rellich inequality}
In this section, our goal is to establish an improvement of the Knopp inequality (\ref{KNOP1}).
\subsection{Knopp inequality of order $\alpha=2$}
Let us substitute $\alpha=2$ in (\ref{KNOP1}), then the Knopp inequality of order $2$ reads as below
\begin{align}{\label{KNOPORDER2}}
&\displaystyle\sum_{n=1}^{\infty}\Big(\frac{1}{n(n+1)}\displaystyle\sum_{k=1}^{n}(n-k+1)|a_{k}|\Big)^{2}
<\frac{16}{9}\displaystyle\sum_{n=1}^{\infty}|a_{n}|^{2}.
\end{align}
Now we define a backward difference operator $\nabla$ acting on the sequence $\{A_n\}$ as $(\nabla A)_n=A_n-A_{n-1}$ so that $(\nabla^2 A)_n=-2A_{n-1}+A_{n-2}+A_{n}$ with $A_0=0$ and all negative subscript are zero. With this notation, and by choosing $A_{n}=\displaystyle\sum_{k=1}^{n}(n-k+1)|a_{k}|$, inequality (\ref{KNOPORDER2}) takes the following form:
\begin{align}{\label{EFKNOPORDER2}}
&\displaystyle\sum_{n=1}^{\infty}\frac{|A_n|^2}{n^{2}(n+1)^{2}}
<\frac{16}{9}\displaystyle\sum_{n=1}^{\infty}|(\nabla^2A)_{n}|^{2}.
\end{align}Then we have the following result.
\begin{Proposition}
The improvement of the Rellich inequality implies that the improvement of the Knopp inequality of order $2$.
\end{Proposition}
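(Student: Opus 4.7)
The strategy is to recast the Knopp inequality (\ref{EFKNOPORDER2}) as a shifted discrete Rellich inequality, so that the improved Rellich bound (\ref{IMDRI}) translates directly into a refinement of Knopp's inequality. The plan is to perform a single shift of indices that identifies $(\nabla^2 A)_n$ with $((-\Delta)B)_n$ for a suitable $B$, after which (\ref{IMDRI}) supplies the desired weight sequence.

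First I would introduce $B_m := A_{m-1}$ for $m\geq 1$, together with $B_0=0$. Since the Knopp setup assumes $A_0=0$ (and $A_n=0$ for negative subscripts), we automatically get $B_0=B_1=0$, which are precisely the boundary conditions needed for the Rellich inequality on the half-line. A direct computation then gives
\[
(\nabla^2 A)_n = A_n-2A_{n-1}+A_{n-2} = B_{n+1}-2B_n+B_{n-1} = -\bigl((-\Delta)B\bigr)_n,
\]
so $|(\nabla^2 A)_n|^2=|((-\Delta)B)_n|^2$. Combined with $|A_n|^2=|B_{n+1}|^2$ and the reindexing $m=n+1$, the Knopp inequality (\ref{EFKNOPORDER2}) becomes equivalent to
\begin{align*}
\sum_{m=2}^{\infty}\frac{|B_m|^2}{(m-1)^2 m^2} &< \frac{16}{9}\sum_{n=1}^{\infty}\bigl|((-\Delta)B)_n\bigr|^2.
\end{align*}

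Next I would apply the improved Rellich inequality (\ref{IMDRI}) to $B$, which yields $\sum_n|((-\Delta)B)_n|^2\geq \sum_{m\geq 2}\rho_m^{(2)}|B_m|^2$ with $\rho_m^{(2)}>9/(16m^4)$. Multiplying by $16/9$ and translating back through $m=n+1$ produces the candidate refined Knopp inequality
\begin{align*}
\sum_{n=1}^{\infty}V_n|A_n|^2 &\leq \frac{16}{9}\sum_{n=1}^{\infty}|a_n|^2,\qquad V_n:=\tfrac{16}{9}\,\rho_{n+1}^{(2)},
\end{align*}
where the positive gap $(16/9)\sum|R^{(2)}B|^2$ coming from the Rellich factorization (Theorem \ref{THMI}) would be inherited as the remainder of the improvement.

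The main obstacle will be to verify that the new weight $V_n$ strictly exceeds the classical Knopp weight $1/(n^2(n+1)^2)$ pointwise, which is equivalent to $\rho_{n+1}^{(2)}>9/(16\,n^2(n+1)^2)$. The a priori estimate $\rho_m^{(2)}>9/(16m^4)$ alone only yields $V_n>1/(n+1)^4$, and this is insufficient because $1/(n+1)^4<1/(n^2(n+1)^2)$. I expect this to be the delicate step: it requires either a refined estimate based on the explicit expression
\[
\rho_m^{(2)} \;=\; 6-4\bigl(1+\tfrac{1}{m}\bigr)^{3/2}-4\bigl(1-\tfrac{1}{m}\bigr)^{3/2}+\bigl(1+\tfrac{2}{m}\bigr)^{3/2}+\bigl(1-\tfrac{2}{m}\bigr)^{3/2},
\]
or, if the pointwise comparison fails for small $m$, a further appeal to the positive Rellich remainder $\sum|R^{(2)}B|^2$ to absorb the deficit. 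Once the pointwise inequality $V_n>1/(n^2(n+1)^2)$ is secured, the improvement of Knopp of order $2$ is an immediate consequence of the chain of inequalities above.
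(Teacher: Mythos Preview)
Your shift $B_m=A_{m-1}$ is exactly what creates the obstacle you identify, and that obstacle is real: the pointwise inequality $\rho_{n+1}^{(2)}>\tfrac{9}{16n^2(n+1)^2}$ genuinely \emph{fails} at $n=1$. Numerically,
\[
\rho_2^{(2)}=6-4\Bigl(\tfrac{3}{2}\Bigr)^{3/2}-4\Bigl(\tfrac{1}{2}\Bigr)^{3/2}+2^{3/2}+0
=6-3\sqrt{6}+\sqrt{2}\approx 0.066,
\]
while $\tfrac{9}{16\cdot 1^2\cdot 2^2}=\tfrac{9}{64}\approx 0.141$. Thus your proposed weight $V_1=\tfrac{16}{9}\rho_2^{(2)}\approx 0.117$ is strictly \emph{smaller} than the classical Knopp weight $\tfrac{1}{1^2\cdot 2^2}=0.25$, so the chain breaks at the first term. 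Your suggested fallback of ``absorbing the deficit'' via the Rellich remainder $\sum|R^{(2)}B|^2$ is not a proof: that remainder is a fixed quadratic form which is not simply a multiple of $|B_2|^2$, and there is no reason it should compensate uniformly over all sequences.

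The paper avoids this entirely by \emph{not} shifting. Working with $A$ itself under the Rellich boundary conditions $A_0=A_1=0$, one has directly
\[
\sum_{n\ge 1}|(\nabla^{2}A)_n|^{2}=\sum_{n\ge 1}|((-\Delta)A)_n|^{2},
\]
and then the comparison of weights is immediate in the \emph{right} direction:
\[
\rho_n^{(2)}>\frac{9}{16n^{4}}\ge \frac{9}{16\,n^{2}(n+1)^{2}},
\]
since $n^2(n+1)^2\ge n^4$. So the ``delicate step'' you anticipated simply does not arise; it was manufactured by the index shift. The trade-off is that the paper's argument tacitly imposes $A_1=0$ (equivalently $a_1=0$), while your shift needs only $A_0=0$; but for the proposition as stated, the unshifted route is the one that actually closes.
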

\begin{proof}
Suppose that the improvement of the Rellich inequality holds, i.e., inequality (\ref{IMDRI}) is satisfied as below:
\begin{align*}
\displaystyle\sum_{n=1}^{\infty}|((-\Delta)A)_n|^{2}& \geq \displaystyle\sum_{n=2}^{\infty}\rho_n^{(2)}\frac{|A_{n}|^{2}}{n^4}>\frac{9}{16}\displaystyle\sum_{n=2}^{\infty}\frac{|A_{n}|^{2}}{n^4},
\end{align*}where $A_0=A_1=0$, and $\rho_n^{(2)}$ is the improved Rellich weight. On the other hand, with this assumption we have observed that
\begin{center}
$\displaystyle\sum_{n=1}^{\infty}|(\nabla^{2}A)_{n}|^{2}=\displaystyle\sum_{n=1}^{\infty}|2A_{n-1}-A_{n-2}-A_{n}|^{2}
=\displaystyle\sum_{n=1}^{\infty}|((-\Delta) A)_{n}|^{2}$.
\end{center}Hence from inequality (\ref{EFKNOPORDER2}), one gets
\begin{align*}
\displaystyle\sum_{n=1}^{\infty}|(\nabla^{2}A)_{n}|^{2}&=\displaystyle\sum_{n=1}^{\infty}|((-\Delta) A)_{n}|^{2}\geq \displaystyle\sum_{n=2}^{\infty}\rho_n^{(2)}\frac{|A_{n}|^{2}}{n^4}>\frac{9}{16}\displaystyle\sum_{n=2}^{\infty}\frac{|A_{n}|^{2}}{n^4}\geq \frac{9}{16}\displaystyle\sum_{n=1}^{\infty}\frac{|A_n|^2}{n^{2}(n+1)^{2}}.
\end{align*}This proves the result.
\end{proof}

\subsection{Knopp inequality of order $\alpha\geq 1$}
Similar to the previous discussion, we choose
\begin{center}
$A_{n}=\displaystyle\sum_{k=1}^{n}\binom{n-k+\alpha-1}{n-k}|a_{k}|$,
\end{center}and define backward difference operator of order $\alpha$ as
\begin{center}
$(\nabla^\alpha A)_n=\displaystyle\sum_{k=1}^{n}(-1)^{k+1}\binom{\alpha}{k-1}A_{n-k+1}$.
\end{center}
We then have the higher order Knoop's inequality (\ref{KNOP1}) stated as below
\begin{align}{\label{KNOPPORDALPH}}
&\displaystyle\sum_{n=1}^{\infty}\frac{|A_{n}|^{2}}{\Big\{\binom{n-1+\alpha}{n-1}\Big\}^2}
<\Big\{\frac{\Gamma(\alpha+1)\Gamma(\frac{1}{2})}{\Gamma(\alpha+\frac{1}{2})}\Big\}^{2}\displaystyle\sum_{n=1}^{\infty}|\nabla^{\alpha}A_{n}|^{2}.
\end{align}
Then we have the following result.
\begin{Proposition}
The improvement of the higher order Knopp inequalities lies on the improvement of the higher order Rellich inequalities.
\end{Proposition}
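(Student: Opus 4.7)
The plan is to extend the $\alpha=2$ argument of the previous proposition to general $\alpha\geq 1$, using the conjectured improved higher order Rellich inequality (\ref{IMPROR}) as the key ingredient. Substituting $A_n=\sum_{k=1}^n \binom{n-k+\alpha-1}{n-k}|a_k|$ (with $A_n=0$ for $n\leq 0$) turns the Knopp inequality (\ref{KNOP1}) into the reformulated version (\ref{KNOPPORDALPH}), so it suffices to estimate the right-hand side $\sum|(\nabla^{\alpha}A)_n|^2$ from below by the left-hand side plus a strictly positive slack.

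The first step would be to bridge $\nabla^{\alpha}$ and $(-\Delta)^{\alpha}$ via the operator identity
\begin{equation*}
\sum_{n=1}^{\infty}|(\nabla^{\alpha}A)_n|^{2}=\sum_{n=\alpha}^{\infty}((-\Delta)^{\alpha}A)_n\bar{A}_n.
\end{equation*}
Since $\nabla=I-S^{*}$ and $\nabla^{*}=I-S$ satisfy $\nabla\nabla^{*}=\nabla^{*}\nabla=(-\Delta)$ on the interior, one has $(\nabla^{\alpha})^{*}\nabla^{\alpha}=(-\Delta)^{\alpha}$; the boundary vanishing $A_0=A_1=\cdots=A_{\alpha-1}=0$, which is automatic from the convolutional form of $A_n$, absorbs the edge terms arising in the summation by parts. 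For $\alpha=2$ this reduces to the identity already used in the preceding proposition.

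The second step is the elementary pointwise estimate
\begin{equation*}
\binom{n-1+\alpha}{n-1}=\frac{(n+\alpha-1)(n+\alpha-2)\cdots n}{\alpha!}\geq\frac{n^{\alpha}}{\alpha!},\qquad n\in\mathbb{N},
\end{equation*}
which yields $\binom{n-1+\alpha}{n-1}^{-2}\leq (\alpha!)^{2}n^{-2\alpha}$. Chaining this with (\ref{IMPROR}) and the Legendre duplication formula $\Gamma(\alpha+\tfrac{1}{2})=(2\alpha)!\sqrt{\pi}/(4^{\alpha}\alpha!)$ (which gives $(\alpha!)^{2}\cdot\frac{16^{\alpha}(\alpha!)^{2}}{((2\alpha)!)^{2}}=\big(\frac{\Gamma(\alpha+1)\Gamma(\tfrac12)}{\Gamma(\alpha+\tfrac12)}\big)^{2}$) produces
\begin{equation*}
\sum_{n=1}^{\infty}\frac{|A_n|^{2}}{\binom{n-1+\alpha}{n-1}^{2}}\leq (\alpha!)^{2}\sum_{n=1}^{\infty}\frac{|A_n|^{2}}{n^{2\alpha}}<\sum_{n=1}^{\infty}w_n^{(\alpha)}|A_n|^{2}\leq\Big(\frac{\Gamma(\alpha+1)\Gamma(\tfrac{1}{2})}{\Gamma(\alpha+\tfrac{1}{2})}\Big)^{\!2}\sum_{n=1}^{\infty}|(\nabla^{\alpha}A)_n|^{2},
\end{equation*}
where the improved Knopp weight is $w_n^{(\alpha)}=\frac{16^{\alpha}(\alpha!)^{4}}{((2\alpha)!)^{2}}\,\rho_n^{(\alpha)}$. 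The strict middle inequality is inherited directly from the strict improvement $\rho_n^{(\alpha)}>((2\alpha)!)^{2}/(16^{\alpha}(\alpha!)^{2}n^{2\alpha})$ of Rellich, establishing the improvement of Knopp.

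The main obstacle is that the higher order Rellich inequality (\ref{IMPROR}) is itself only conjectured in \cite{BGDKFS}; consequently the proposition must be read as a conditional implication, the content being that any proof of the Rellich improvement automatically yields a Knopp improvement through the above recipe. A minor technical point is the bookkeeping in the operator identity for $\alpha\geq 3$: one must verify that the bulk factorization $(\nabla^{\alpha})^{*}\nabla^{\alpha}=(-\Delta)^{\alpha}$ is compatible with the half-line boundary convention so that no spurious edge terms appear, which is routine but increasingly tedious as $\alpha$ grows.
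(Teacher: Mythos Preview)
Your proposal follows essentially the same route as the paper: both arguments hinge on the identity $\sum_n |(\nabla^{\alpha}A)_n|^2=\sum_n ((-\Delta)^{\alpha}A)_n\bar A_n$, then invoke the conjectured improved Rellich bound (\ref{IMPROR}), and finish with the pointwise comparison $\binom{n-1+\alpha}{n-1}\geq n^{\alpha}/\alpha!$ together with the duplication-formula identification of constants. In fact you spell out the last two steps more explicitly than the paper does, which leaves them implicit in its final chain of inequalities.

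There is one inaccuracy worth flagging. You assert that the boundary vanishing $A_0=A_1=\cdots=A_{\alpha-1}=0$ is ``automatic from the convolutional form of $A_n$''. It is not: with $A_n=\sum_{k=1}^{n}\binom{n-k+\alpha-1}{n-k}|a_k|$ one has $A_1=|a_1|$, $A_2=\alpha|a_1|+|a_2|$, and so on, none of which vanish in general. The paper does not claim automaticity either; it simply \emph{imposes} $A_n=0$ for $0\le n\le \alpha-1$ as part of the hypothesis under which (\ref{IMPROR}) is stated, and proceeds from there. So your argument goes through under the same restriction the paper adopts, but the sentence claiming that this restriction comes for free should be dropped.
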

\begin{proof}
Suppose that the inequality (\ref{IMPROR}) holds true with the assumption $A_n=0$ for all $n=0, 1, \ldots, \alpha-1$, and for all negative subscripts $n$.
Then after sincere investigation for even $\alpha\geq2$ it is observed that
\begin{align}
&\displaystyle\sum_{n=\alpha}^{\infty}|\nabla^{\alpha}A_{n}|^{2}=\displaystyle\sum_{n=\frac{\alpha}{2}}^{\infty}|((-\Delta)^{\frac{\alpha}{2}}A)_{n}|^{2}
\end{align} and for odd $\alpha\geq 2$, we have
\begin{align}
&\displaystyle\sum_{n=\alpha}^{\infty}|\nabla^{\alpha}A_{n}|^{2}=\displaystyle\sum_{n=\frac{\alpha+1}{2}}^{\infty}|\nabla((-\Delta)^{\frac{\alpha-1}{2}}A)_{n}|^{2}.
\end{align}
On the other hand, a straightforward calculation suggest that for any $\alpha\in\mathbb{N}$, the following identity holds.
\begin{align}{\label{Knopp=Rellich R}}
&\displaystyle\sum_{n=\alpha}^{\infty}|\nabla^{\alpha}A_{n}|^{2}=\displaystyle\sum_{n=\alpha}^{\infty}((-\Delta)^{\alpha}A)_{n}\bar{A}_{n}.
\end{align}
Therefore, we conclude that
\begin{align*}
\displaystyle\sum_{n=\alpha}^{\infty}|\nabla^{\alpha}A_{n}|^{2}&=\displaystyle\sum_{n=\alpha}^{\infty}((-\Delta)^{\alpha}A)_{n}\bar{A}_{n}\\
&\geq\displaystyle\sum_{n=\alpha}^{\infty}\rho^{(\alpha)}_{n}|A_{n}|^{2}
>\displaystyle\sum_{n=\alpha}^{\infty}\frac{((2\alpha)!)^{2}}{16^{\alpha}(\alpha!)^{2}}\frac{1}{n^{2\alpha}}|A_{n}|^{2}
\geq \Big\{\frac{\Gamma(\alpha+\frac{1}{2})}{\Gamma(\alpha+1)\Gamma(\frac{1}{2})}\Big\}^{2}\displaystyle\sum_{n=\alpha}^{\infty}\frac{|A_{n}|^{2}}{\Big\{\binom{n-1+\alpha}{n-1}\Big\}^{2}}.
\end{align*}
Hence the result.
\end{proof}
%


\textbf{Declarations:} The authors have no competing interests to declare that are relevant to the content of this article.

\textit{Acknowledgement:} There are currently no funding resources for this research.

 \end{document}